\documentclass[12pt]{amsart}

\usepackage{amssymb,amsfonts,amsthm,amsmath,amscd}
\usepackage{tikz}
\usepackage{fancybox}
\usepackage{pgf,pgfarrows,pgfnodes,pgfautomata,pgfheaps,pgfshade}
\usepackage{graphicx}
\usepackage[all,graph,frame]{xy}
\usepackage{appendix}
\usepackage{enumerate}

\newtheorem{proposition}{Proposition}
\newtheorem{lemma}[proposition]{Lemma}
\newtheorem{theorem}[proposition]{Theorem}
\newtheorem{corollary}[proposition]{Corollary}

\theoremstyle{definition}

\newtheorem{remark}[proposition]{Remark}

\def\G{\mathcal{G}}
\def\P{\mathcal{P}}

\def\Z{\mathbb{Z}}

\begin{document}

\title{{\sc Slow k-Nim}}


\author{Vladimir Gurvich}
\address{MSIS and RUTCOR, RBS, Rutgers University,
100 Rockafeller Road, Piscataway, NJ 08854}
\email{vladimir.gurvich@rutgers.edu}

\author{Nhan Bao Ho}
\address{Department of Mathematics and Statistics, La Trobe University, Melbourne, Australia 3086}
\email{nhan.ho@latrobe.edu.au, nhanbaoho@gmail.com}

\subjclass[2000]{91A46}
\keywords{impartial games, {\sc Nim}, Moore's {\sc  Nim}, {\sc $k$-Nim}, normal and mis\`ere versions, $\P$-positions, Sprague-Grundy function.}

\begin{abstract}
Given  $n$  piles of tokens and a positive integer  $k \leq n$, we study the following two impartial combinatorial games {\sc Nim}$^1_{n, \leq k}$  and  {\sc Nim}$^1_{n, =k}$. In the first (resp.~second) game, a player, by one move, chooses at least $1$ and at most (resp.~exactly) $k$ non-empty piles and removes one token from each of these piles. For the normal and mis\`ere version of each game we compute the Sprague-Grundy function for the cases $n = k = 2$  and  $n = k+1 = 3$. For game {\sc Nim}$^1_{n, \leq k}$ we also characterize its $\P$-positions for the cases  $n \leq k+2$  and  $n = k+3 \leq 6$.
\end{abstract}

\maketitle


\section{Introduction and previous results}
\label{s0}

We assume that the reader is familiar with basics
of the Sprague-Grundy (SG) theory of impartial games \cite{Gru39, GS56, Spr35, Spr37}.
In this paper we will need only the concept of
the SG function and $\P$-positions for the normal and mis\`ere versions, which
are presented in almost every paper about impartial games;
see, for example, \cite{BGHM15, BGHMM15, GurHo15}, which are closely related to the present paper.
An introduction to the SG theory
can be found in \cite{BCG01-04, Con76}.

We denote by  $\Z_\geq$ (resp.~$\Z_>$) the set of non-negative
(resp.~positive) integers. For $t \in \Z_\geq$, a position whose SG value is $t$  will be called a $t$-position.

In this section we briefly survey
several variants of the game  {\sc Nim}.
The normal version is considered in Sections 1.1 - 1.4, and the mis\`ere one in Section \ref{Ss-mis}.


\subsection{Classic {\sc Nim}}  \label{Ss.Nim}
The ancient game of {\sc Nim} is played as follows.
There are   $n$  piles containing  $x_1, \ldots, x_n$  tokens.
Two players alternate turns.
By one move, a player chooses a non-empty pile  $x_i$ and removes
an arbitrary (strictly positive) number of tokens from it.
The game terminates when all piles are empty.
The player who made the last move wins the normal version of the game and loses its mis\`ere version.
Both versions were solved
(all $\P$-positions found) by Charles Bouton in 1901 in his seminal paper \cite{Bou901}.

By definition, an $n$-pile {\sc Nim} is the sum of $n$ one-pile {\sc Nim}s
and hence the SG function of position $x = (x_1, \ldots x_n)$  is the binary bitwise sum
(so-called Nim-sum) \cite{Bou901, Gru39, Spr35, Spr37}
\begin{equation*} \label{eq0}
\G(x) = \G(x_1) \oplus \cdots \oplus \G(x_n) = x_1  \oplus \cdots \oplus x_n.
\end{equation*}
$\P$-positions of {\sc Nim}, as well as of any impartial game, are
the zeros of its SG function, that is,  $x$  is a $\P$-position if and only if  $\G(x) = 0$.


\subsection{Moore's $k$-{\sc Nim}} \label{Ss.Moore}
In 1910, Eliakim Hastings Moore \cite{Moo910} suggested the following generalization of
{\sc Nim} for any $k,n \in \Z_>$  such that  $k \leq n$.
In this game a player,  by one move, reduces at least one and at most  $k$  piles.
The game turns into the standard  $n$-piles  {\sc Nim}  when  $k=1$ and
is trivial when $k=n$; in the latter case it turns into
{\sc Nim} with one pile of size $s = \sum_{i=1}^n x_i$.
Moore denoted his game by {\sc Nim$_k$};
we will call it Moore's $k$-{\sc Nim} and denote by {\sc Nim}$_{n, \leq k}$.

Moore gave a simple and elegant formula characterizing of the  $\P$-positions of {\sc Nim}$_{n, \leq k}$,
thus, generalizing Bouton's results.
Later, Berge \cite{Ber62}, Jenkyns and Mayberry \cite{JM80}  tried to extend
Moore's formula for the SG function of {\sc Nim}$_{n, \leq k}$ as follows.

Present the cardinalities of  $n$  piles as the binary numbers,
$x_i = x_{i_0} + x_{i_1}2 + x_{i_2}2^2 + \cdots + x_{i_m}2^m$,
take their bitwise sum modulo $k+1$ for every bit  $j$,
that is,  set  $y_j = \sum _{i=1}^n x_{i_j} \bmod{(k+1)}$,
and denote the obtained number  $y_0 y_1 \ldots y_m$ in base $k+1$  by $M(x)$.

In \cite{Moo910}  Moore claimed  that
$x$  is a $\P$-position of {\sc Nim}$_{n, \leq k}$  if and only if  $M(x) = 0$.
This is a generalization of Bouton's result \cite{Bou901} corresponding to the case  $k=1$.
Indeed, in this case  $M(x)$  turns into  $\G(x)$  and
{\sc Nim}$_{n, \leq k}$  turns into the standard $n$-pile  {\sc Nim}.

\begin{remark}
\label{SG-Moore}
Moore published his results in 1910, while the SG function
was introduced only quarter of century later, in 1935-9 \cite{Gru39, Spr35, Spr37}.
It was shown in these papers that the case  $k=1$  has the following remarkable property.
The SG function can be used to solve the sum of arbitrary  $n$  impartial games
$($in the normal version;
not only the $n$-pile {\sc Nim}, which is the sum of  $n$  one-pile {\sc Nim} games$)$.
The concept of the sum can be generalized to that of the $k$-sum as follows:
by one move a  player chooses at least one and at most  $k$  game-summands and
makes an arbitrary legal move in each of them.
For example, {\sc Nim}$_{n, \leq k}$  is the $k$-sum of $n$ one-pile {\sc Nim} games.
By Moore's result, function $M_{n,k}(x)$ can be used to find all  $\P$-positions of the latter, but
this application works only for the standard sum,
which corresponds to the case  $k=1$; see Section 2.4 of \cite{BGHMM15} for more details.
\end{remark}

Moore's result for  $\P$-positions was extended to $1$-positions
by Jenkyns and Mayberry \cite{JM80}; see also \cite{BGHM15} for an alternative proof.

\begin{proposition} [\cite{JM80}]
\label{p1}
For  $t = 0$  and  $t = 0$, vector  $x$  is
a $t$-position of {\sc Nim}$_{n, \leq k}$
if and only if  $M(x) = t$.
\qed
\end{proposition}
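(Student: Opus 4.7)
The plan is to prove, for each $t \in \{0, 1\}$, that $\G(x) = t$ if and only if $M(x) = t$. By induction on $t$, this reduces to two implications: \emph{closure}, stating that if $M(x) = t$ then no legal move reaches $x'$ with $M(x') = t$; and \emph{reachability}, stating that if $M(x) \notin \{0, 1, \ldots, t\}$, then some legal move reaches $x'$ with $M(x') = t$. The base case $t = 0$ recovers Moore's theorem, and the case $t = 1$ is the Jenkyns--Mayberry extension.

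\textbf{Closure.} Suppose $x \to x'$ modifies a set $T$ of at most $k$ piles, and let $j^*$ be the highest bit index at which any pile in $T$ has its binary expansion changed. No pile has any bit above $j^*$ altered; and the pile realizing the maximum in the definition of $j^*$ must have its $j^*$-th bit flipped from $1$ to $0$, since otherwise that pile would not strictly decrease. Thus the base-$(k+1)$ digits of $M(x')$ and $M(x)$ agree above $j^*$, and the digit at position $j^*$ changes by $-m$ modulo $k+1$ for some $m \in \{1, \ldots, k\}$. For $t = 0$ this immediately gives $M(x') \neq 0$. For $t = 1$, I split on $j^*$: if $j^* \geq 1$, then $M(x')$ acquires a nonzero digit at position $\geq 1$, forcing $M(x') \geq k + 1 > 1$; if $j^* = 0$, every pile in $T$ modifies only its zero-th bit, necessarily flipping it from $1$ to $0$, so the new digit at $0$ is $(1 - |T|) \bmod (k+1)$, which differs from $1$ for every $|T| \in \{1, \ldots, k\}$.

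\textbf{Reachability.} For $M(x) \neq 0$, let $j^*$ be the highest nonzero digit of $M(x)$. Since the number of piles with $j^*$-th bit equal to $1$ is $\equiv y_{j^*} \pmod{k+1}$, at least $y_{j^*}$ such piles exist; I place $y_{j^*}$ of them into $T$ and flip their $j^*$-th bits to $0$, which zeroes out the digit at position $j^*$ in $M$. Then, processing bits from $j^* - 1$ down to $0$, I prescribe the corresponding bit of each $x'_i$ for $i \in T$ so that the digit of $M(x')$ at that position matches its target (which is $0$ everywhere in the case $t = 0$, and $0$ everywhere except $1$ at position $0$ in the case $t = 1$). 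If the current $T$ does not provide enough flexibility at some bit, I extend $T$ by a pile with bit $j^*$ equal to $0$, to be adjusted only at lower bits, while keeping $|T| \leq k$ and $x'_i < x_i$ throughout.

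\textbf{Main obstacle.} The technical core lies in the reachability step: at each bit below $j^*$ the required residue lies in $\{0, \ldots, k\}$ while only $|T|$ binary degrees of freedom are naively available, and the global constraints $|T| \leq k$ and $x'_i < x_i$ must hold. The careful bookkeeping needed to balance these constraints is precisely the content of Moore's original argument \cite{Moo910} and its refinement by Jenkyns and Mayberry \cite{JM80}; adapting it from target $M(x') = 0$ to $M(x') = 1$ in the $t = 1$ case is a straightforward modification.
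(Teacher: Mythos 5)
The paper itself offers no proof of Proposition \ref{p1}: it is quoted from Jenkyns and Mayberry \cite{JM80} (with \cite{BGHM15} mentioned as an alternative source), so there is no internal argument to compare yours against; it must be judged on its own. Your skeleton is the standard one (show the candidate sets $\{x : M(x)=0\}$ and $\{x : M(x)=1\}$ satisfy the move conditions characterizing $0$- and $1$-positions, then induct), and your closure half is complete and correct: every pile attaining the highest changed bit $j^*$ must flip that bit from $1$ to $0$, so the base-$(k+1)$ digit at $j^*$ drops by some $m\in\{1,\dots,k\}$ modulo $k+1$ while all higher digits are untouched, and your two subcases for $t=1$ are handled correctly.

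The genuine gap is in the reachability half, which is the entire substance of Moore's theorem, and you do not carry it out: you correctly identify the obstacle (at a bit $j<j^*$ the required residue may exceed the number $|T|$ of ``free'' piles) but then delegate its resolution to \cite{Moo910, JM80} and assert the $t=1$ adaptation is straightforward — that is a citation, not a proof. Worse, the one concrete device you do propose, extending $T$ by a pile whose bit $j^*$ equals $0$ ``to be adjusted only at lower bits,'' is unsound as stated: every pile added to $T$ must strictly decrease, and adjusting it only below $j^*$ does not guarantee that, nor does it obviously supply the missing residue. The standard repair is short and you should include it: processing bits from $j^*$ downward, let $c_j$ be the number of ones at bit $j$ among unselected piles, $f=|T|$ the number of free piles, and $r$ the residue required from the selected piles ($r=(-c_j)\bmod(k+1)$ for target $0$, and $(1-c_0)\bmod(k+1)$ at bit $0$ for target $1$). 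If $r\le f$, set bit $j$ to $1$ in exactly $r$ free piles; if $r>f$, instead adjoin $k+1-r$ unselected piles whose bit $j$ equals $1$ and flip that bit to $0$ — such piles exist because $c_j$ is congruent to $k+1-r$ (resp.\ $k+2-r$) modulo $k+1$ and hence is at least that large, each adjoined pile strictly decreases, and $|T|$ stays at most $k$ since $k+1-r\le k-f$ when $r>f$. Together with the easy separate case $M(x)=y_0\in\{2,\dots,k\}$ for the $t=1$ target (remove one token from each of $y_0-1$ odd piles), this closes the gap and turns your outline into a complete proof. (Minor point: the statement's ``$t=0$ and $t=0$'' is a typo for ``$t=0$ and $t=1$,'' which you correctly read as $t\in\{0,1\}$.)
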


In his book \cite{Ber62}, Claude Berge claimed that Proposition \ref{p1}
can be generalized for any  $t$, or in other words, that
$\G(x) = M(x)$; see \cite[page~55, Theorem 3]{Ber62}.
However, this is an overstatement, as it was pointed out in  \cite{JM80}.
The claim only holds when $M(x) \leq 1$;
see Theorems 11 and 12 on page 61 and also page 53 of \cite{JM80}.
For  $t > 1$,  the  $t$-positions are no longer related to Moore's function  $M$;
moreover, it seems difficult to characterize them in general, for all  $k$;
see \cite{BGHM15, BGHMM15} for more details.
Yet, the case $n = k+1$  is tractable:
for $n = 2$ the game turns into the two-pile {\sc Nim};
for $n > 2$ the SG function was first given in \cite{JM80};
this result was rediscovered and generalized in \cite{BGHM15} as follows.


\subsection{{Game \sc Exco-Nim}, as a generalization of Moore's Nim with  $n = k+1$}
\label{Ss.Exco}
The following game called {\sc Exco-Nim}
(extended complementary {\sc Nim})  was suggested in \cite{BGHM15}.
Given $n+1$  piles containing
$x_0, x_1, \ldots, x_n$  tokens, two players move alternatingly.
By one move, a player can reduce  $x_0$
together with at most  $n-1$  of the remaining  $n$  numbers such that
at least one token is removed.

Obviously, this game is a generalization of Moore's {\sc Nim} with  $k = n-1$:
the former turns into the latter when  $x_0 = 0$.
In case  $n \geq 3$, it is not difficult to generalize
Jenkyns and Mayberry's formula for the SG function of Moore's {\sc Nim} as follows.

\begin{theorem} [\cite{BGHM15}] \label{t1}
Given a position  $x = (x_1, \ldots, x_n)$, let us define
\begin{equation}
\label{eq2}
u(x) = \sum_{i = 0}^n x_i, \quad  m(x) = \min_{i = 1}^n x_i,
\quad y = u(x) - n \, m(x),  \quad z = \frac{1}{2} (y^2 + y + 2).
\end{equation}
Then, for any  $n \geq 3$ the SG function
of {\sc Nim}$_{n, \leq n-1}$  is given by the formula:
\begin{equation} \label{eq3}
\G(x) ~=~
\begin{cases}
u(x),                       & \text{ if } m(x) < z;\\
(z-1) + (m-z) \bmod{(y+1)}, &\text{ otherwise.}
\end{cases}
\end{equation}
\qed
\end{theorem}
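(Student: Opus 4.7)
The plan is to prove the formula by induction on $u(x)$. Let $f(x)$ denote the right-hand side of (\ref{eq3}). Since $\G(x) = \mathrm{mex}\{\G(x') : x \to x'\}$ and $\G(x') = f(x')$ by the inductive hypothesis, it is enough to establish two conditions: (A) no legal move $x \to x'$ satisfies $f(x') = f(x)$, and (B) for every integer $v$ with $0 \leq v < f(x)$ there exists a legal move $x \to x'$ with $f(x') = v$. The base case $x = (0, \ldots, 0)$ is immediate, and the inductive step splits according to which branch of (\ref{eq3}) determines $f(x)$.

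Suppose first that $m(x) < z(x)$, so $f(x) = u(x)$. For (A), any legal move strictly decreases $u$, hence followers whose value is given by Branch~1 automatically satisfy $f(x') = u(x') < u(x) = f(x)$; for a follower $x'$ whose value is given by Branch~2 one has $f(x') \leq z(x') - 1 + y(x')$ while $u(x') \geq n\, m(x') \geq n\, z(x')$, and since $n \geq 3$ this yields $f(x') \leq u(x') - (n-1)z(x') - 1 < u(x)$. For (B), I would fix a pile $j$ attaining the minimum and redistribute the remaining piles: for each $v \in [m(x), u(x)-1]$ choose values for the non-fixed piles so that $u(x') = v$ and verify that $m(x')$ remains below $z(x')$; for $v \in [0, m(x)-1]$, cross into Branch~2 by selecting a target $y' = y(x') \in [0, y(x)-1]$ and a common level $\ell$ for the $n-1$ non-fixed piles so that $z(x') - 1 + ((m(x) - z(x')) \bmod (y'+1)) = v$. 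The value $v = 0$ is realized by simply equalizing all non-minimum piles down to $m(x)$.

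Suppose instead that $m(x) \geq z(x)$, so $f(x) = z - 1 + r$ with $r \in [0, y]$. Then $f(x) \leq z + y - 1$ is much smaller than $u(x)$, and the same gap argument rules out Branch~1 followers realizing $f(x)$; for Branch~2 followers, the key observation is that any legal move shifts the pair $(y(x'), m(x') \bmod (y(x')+1))$ nontrivially, so $f(x') \neq f(x)$. For (B), the values $v \in [z-1, f(x)-1]$ are produced by preserving $y$ and $z$ while lowering the residue---concretely, by subtracting a common positive amount from each of the $n-1$ non-minimum piles. The values $v \in [0, z-2]$ force a crossover into Branch~1, arranged by reducing one or two piles aggressively enough that $m(x') < z(x')$ while hitting the prescribed $u(x') = v$.

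The main obstacle is carrying out (B) in this second case, especially the crossover regime $v < z - 1$: the constraint that at least one pile must be left untouched bounds $u(x')$ from below, and one must check that no prescribed $v$ falls into an unreachable gap. Here the specific triangular form $z - 1 = \binom{y+1}{2}$ is essential---it is exactly this growth rate that makes the Branch-2 values $\{z(y) - 1 + r : y \in \Z_\geq, \, r \in [0, y]\}$ tile $\Z_\geq$, and matching this tiling against the reachability constraints is the combinatorial core of the argument. The hypothesis $n \geq 3$ enters precisely at this step, since with only $n = 2$ a player controls a single pile per move and the delicate balance between $y$, $m$, and the residue cannot be simultaneously arranged.
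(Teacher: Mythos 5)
The paper itself gives no proof of Theorem \ref{t1}: it is quoted from \cite{BGHM15} and stated with a \qed, so there is no in-paper argument to compare your route against. Judged on its own terms, your proposal has the right skeleton — induction on $u(x)$, verification of the mex conditions (A) and (B), the case split along the two branches of (\ref{eq3}), and the correct structural observation that $z-1=y(y+1)/2$ is triangular, so the Branch-2 value ranges $[z-1,\,z-1+y]$ tile $\Z_\geq$ — but it is a plan rather than a proof, because the statements that carry essentially all of the content are either asserted without argument or explicitly deferred.

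Concretely: (i) in the case $m(x)\geq z(x)$, the claim that no move leads to another Branch-2 position with the same value is the heart of the theorem, and you dispose of it in one sentence (``any legal move shifts the pair $(y(x'),\,m(x')\bmod (y(x')+1))$ nontrivially''), which is precisely what must be proved; (ii) the realizability condition (B) in that case — producing every $v<f(x)$, in particular every $v\leq z-2$, which forces a crossover into Branch 1 while at least one pile of size $\geq m(x)\geq z$ must remain untouched — you yourself name as ``the main obstacle'' and leave undone, so the combinatorial core is missing; (iii) even in the Branch-1 case, hitting the values $v\in[0,m(x)-1]$ and guaranteeing $m(x')<z(x')$ for the targets $v\geq m(x)$ requires an explicit construction (naively equalizing the reduced piles can land you in Branch 2 with value $0\neq v$), and ``verify that $m(x')$ remains below $z(x')$'' is not automatic. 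Finally, your exclusion of Branch-1 followers realizing $f(x)$ when $m(x)\geq z(x)$ compares $f(x)$ with $u(x)$, which does not suffice: what is needed is the lower bound $u(x')\geq m(x)$ (coming from the untouched pile) combined with $f(x)\leq z-1+(m-z)=m-1<m(x)$, using $(m-z)\bmod(y+1)\leq m-z$. Until these verifications are supplied, the mex computation — and hence the theorem — is not established.
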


In case $n=2$  Moore's {\sc Nim}  turns into the
standard  {\sc Nim} with two piles, which is trivial.
Somewhat surprisingly, {\sc Exco-Nim}
in this case becomes very difficult;
see \cite{BGHM15} for partial results.

\subsection{Exact {\sc $k$-Nim}}
\label{Ss.Exact}
In \cite{BGHMM15}, one more version of {\sc Nim} was introduced;
it was called {\sc Exact $k$-{\sc Nim}} and denoted {\sc Nim}$_{n, =k}$.
Given integer $k, n$ such that  $0 < k \leq n$  and  $n$ piles of $x_1, \ldots, x_n$ tokens,
by one move a player chooses {\em exactly}  $k$ piles and
removes an arbitrary (strictly positive) number of tokens from each of them.
Clearly, both games, {\sc Nim}$_{n, =k}$  and {\sc Nim}$_{n, \leq k}$,
turn into the standard  $n$-pile {\sc Nim} when  $k=1$  and become trivial when $k=n$.
In \cite{BGHMM15}, the SG function $\G(x)$  of {\sc Nim}$_{n, =k}$  was
efficiently computed for the case  $n \leq 2k$ as follows.

The {\em tetris} function
$T_{n,k}(x)$  was defined in \cite{BGHMM15} by the formula:
\begin{align} \label{tetris}
T_{n,k}(x) = \max\{m \, | \, \exists v_i = (v^i_1, \ldots, v^i_n), 1 \leq i \leq m, \;
v^i_j \in \{0, 1\}, \; \sum_{i = 1}^m v^i_j \leq x_j\}.
\end{align}

In other words, $T_{n,k}(x)$  is the largest number
$m$ of $n$-vectors
$(v^i_1, \ldots, v^i_n)$ with exactly $k$  coordinates 
$1$ and $n-k$ entries $0$ such that
each sum $\sum_{i = 1}^m v^i_j$ of the $j$th coordinates does not exceed $x_j$.
For example, $T_{4,3}(1,1,2,3) = 2$ since $v_1 = (0,1,1,1)$ and $v_2 = (1,0,1,1)$ satisfy (\ref{tetris})
and any three $4$-vectors with exactly three $1$ and one $0$
result in $\sum_{i = 1}^3 v^i_j \geq 2$ for more than two coordinates.

In \cite{BGHMM15}, an algorithm computing  $T_{n,k}(x)$ in polynomial time was obtained and
the following statement shown.

\begin{proposition} [\cite{BGHMM15}]
\label{p2}
The SG function $\G$  of the game {\sc Nim}$_{n, =k}$ and
the tetris function $T_{n,k}$  are equal whenever $2k > n$.
\qed
\end{proposition}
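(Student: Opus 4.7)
My plan is to induct on $u(x) = \sum_i x_i$. The base case covers all positions with fewer than $k$ nonempty piles: no move is legal so $\G(x) = 0$, and no single $0/1$ vector with exactly $k$ ones fits, so $T_{n,k}(x) = 0$. For the inductive step, since $\G(x)$ is the mex of successor SG values and by the inductive hypothesis those values equal $T_{n,k}$ at each successor, it suffices to verify (I) every legal successor $x'$ satisfies $T_{n,k}(x') < T_{n,k}(x)$, and (II) for each integer $0 \le t < T_{n,k}(x)$ some legal successor $x'$ satisfies $T_{n,k}(x') = t$.

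Condition (I) is the easier half and does not require $2k > n$. A legal move reduces exactly the coordinates in some $k$-subset $S \subseteq \{1, \ldots, n\}$, so one can append the characteristic vector $\chi_S$ (which has exactly $k$ ones) to any optimal tetris for $x'$; since $x'_j + \chi_S(j) \le x_j$ for every $j$, the result is a valid tetris for $x$ of size $T_{n,k}(x') + 1$, and hence $T_{n,k}(x') \le T_{n,k}(x) - 1$.

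For (II), I would first establish the continuity estimate $T_{n,k}(x) - T_{n,k}(x - e_j) \in \{0, 1\}$ whenever $x_j \ge 1$: starting from an optimal tetris for $x$, either its constraint at $j$ is slack (and the same filling works for $x - e_j$) or there is a vector with $v_j = 1$ in the filling that can be deleted, producing a tetris of size $T_{n,k}(x) - 1$ for $x - e_j$. Next, I would choose $S$ to be the support of some vector $v^1$ in an optimal tetris for $x$; removing $v^1$ exhibits a tetris of size $T_{n,k}(x) - 1$ for $x - \chi_S$, and combined with (I) this forces $T_{n,k}(x - \chi_S) = T_{n,k}(x) - 1$, so the minimal move on $S$ realizes the tetris value $T_{n,k}(x) - 1$. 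Continuing to remove tokens from the coordinates of $S$ one at a time traces out a sequence of legal successors of $x$ along which $T_{n,k}$ drops by $0$ or $1$ per step, by the continuity estimate.

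The hypothesis $2k > n$ enters decisively at the endpoint of this walk: once all $k$ coordinates of $S$ are emptied, only the $n - k < k$ coordinates outside $S$ remain possibly nonzero, so the final position has tetris value $0$. Thus $T_{n,k}$ passes through every integer in $\{0, 1, \ldots, T_{n,k}(x) - 1\}$ along the walk and in particular hits $t$, proving (II). The main obstacle is ensuring the interval of tetris values swept out covers every $t < T_{n,k}(x)$: picking $S$ from an optimal tetris pins the upper end at $T_{n,k}(x) - 1$, while $2k > n$ pins the lower end at $0$. Without $2k > n$, the lower endpoint could be strictly positive and small target values of $t$ might be unreachable by this scheme, explaining why the statement is restricted to $2k > n$.
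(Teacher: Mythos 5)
Your argument is sound, and it is worth noting that the paper you were asked to match does not actually prove this statement: Proposition~\ref{p2} is imported from \cite{BGHMM15} and stated with no argument, so there is no in-paper proof to compare against. Your proof stands on its own. The key points all check out: the base case ($T_{n,k}(x)=0$ exactly when fewer than $k$ piles are nonempty, which is exactly when no move exists); property (I), since a move on a $k$-set $S$ leaves $x'_j+\chi_S(j)\le x_j$ for all $j$, so appending $\chi_S$ to an optimal filling of $x'$ gives $T_{n,k}(x')\le T_{n,k}(x)-1$; the one-step continuity $T_{n,k}(x)-T_{n,k}(x-e_j)\in\{0,1\}$; the identification $T_{n,k}(x-\chi_S)=T_{n,k}(x)-1$ when $S$ is the support of a vector in an optimal filling; and the observation that every position obtained from $x-\chi_S$ by further deletions inside $S$ is still a legal successor of $x$, so the walk stays within the option set. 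The hypothesis $2k>n$ is used exactly where it must be: at the end of the walk only $n-k<k$ coordinates can be nonzero, forcing the tetris value to $0$, so the discrete intermediate-value argument sweeps out all of $\{0,1,\dots,T_{n,k}(x)-1\}$ and the mex computation gives $\G(x)=T_{n,k}(x)$. This is, in spirit, the natural way the result is established in the cited source (verify that $T_{n,k}$ satisfies the SG recursion), so your route is not exotic, but it is complete and correctly isolates the role of $2k>n$.
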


Another tractable (but more difficult) case is  $n = 2k$.
Somewhat surprisingly, the games {\sc Nim}$_{n, = k}$ for $n = 2k$  and
{\sc Nim}$_{n, \leq k}$  for $n = k+1$ appear to be very similar.

\begin{theorem} [\cite{BGHMM15}]
\label{t2}
For  $n = 2k \geq 4$, the SG function $\G(x)$  of
{\sc Nim}$_{n, = k}$ is still given by formula $(\ref{eq3})$,
but the variables  $u = u(x)$   and  $y = y(x)$  in it are defined
by means of the tetris function
$($rather than by $(\ref{eq2}$$)$  as follows:

\smallskip
\noindent
$u(x) = T_{n,k}(x)$,  $y(x) = T_{n,k}(x')$  for
$x' = (x_1 - m(x), \ldots, x_n - m(x))$.
\qed
\end{theorem}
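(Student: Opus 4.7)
The plan is to proceed by strong induction on $u(x) = T_{n,k}(x)$, mirroring the structure of the proof of Theorem \ref{t1} since the target formula (\ref{eq3}) is identical and the two games are noted to be closely parallel. Before induction I would record a key monotonicity lemma: any legal move in {\sc Nim}$_{n, =k}$ strictly decreases $u$. Indeed, if $x' = x - a$ where $a$ has exactly $k$ positive entries, then appending the characteristic vector of $\operatorname{supp}(a)$ to any optimal packing of $x'$ gives a valid packing of $x$, so $T_{n,k}(x) \geq T_{n,k}(x') + 1$. For the base case $u(x) = 0$, fewer than $k$ piles are non-empty, so $\G(x) = 0$; at the same time $m(x) = 0$, $y(x) = 0$, $z = 1$, and formula (\ref{eq3}) correctly returns $0$.

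For the inductive step I would split into the linear regime $m(x) < z$, where the formula claims $\G(x) = u(x)$, and the modular regime $m(x) \geq z$, where $\G(x) = (z-1) + (m-z) \bmod (y+1)$. In each regime I must verify the mex property, namely that no legal move yields a follower with SG value $\G(x)$, and that every value $g' < \G(x)$ is attained by some follower. To control followers I would develop properties of $T_{n,k}$ that exploit the self-duality $n = 2k$, in particular the fact that the complement of any $k$-subset of $[n]$ is again a $k$-subset. This symmetry lets optimal tetris packings of $x$ be paired so that prescribing the support of a single removed vector leaves the residual packing optimal in a controllable way, and it also enables uniform manipulation of $m(x)$ and $y(x) = T_{n,k}(x - m\mathbf{1})$ under a move. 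Two auxiliary facts I would establish in advance are: (i) a move that takes one token from each of a $k$-set $S$ whose indicator appears in some optimal packing of $x$ drops $u$ by exactly one, and (ii) after subtracting $m$ from each pile the self-dual structure forces $T_{n,k}(x - m\mathbf{1})$ to be the appropriate ``height above the base level,'' playing the role that $u - nm$ plays in Theorem \ref{t1}.

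The main obstacle is the surjectivity direction in the modular regime: for each prescribed target $g' < \G(x)$ one must exhibit a legal move (hitting exactly $k$ piles, each by at least one token) whose follower $x'$ carries the triple $(u(x'), m(x'), y(x'))$ for which the induction evaluates $\G(x')$ to $g'$. Because $u = T_{n,k}$ is a combinatorial quantity rather than a linear function of the $x_i$, the follower constructions cannot be copied verbatim from Jenkyns--Mayberry or from the proof of Theorem \ref{t1}; instead one must argue at the level of tetris packings, detaching a single $k$-vector and then re-optimizing the residual packing while simultaneously adjusting $m$ and $y$ to prescribed values. Once these constructions are available in the $n = 2k$ setting and the inductive hypothesis supplies the SG values of the resulting followers, the remaining mex bookkeeping---monotonicity of $u$, the $\bmod\,(y+1)$ period in the modular regime, and the transition at $m = z$---is formally the same as in the proof of Theorem \ref{t1} and can be transferred essentially verbatim.
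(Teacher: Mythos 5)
The paper you are working from does not actually prove Theorem~\ref{t2}: it is a survey statement quoted from \cite{BGHMM15} and closed with a \qed, so there is no in-paper argument to compare yours against; your proposal has to stand on its own as a reconstruction of the proof in the cited reference. Judged that way, the parts you do carry out are correct but are the easy parts: the monotonicity lemma ($T_{n,k}(x') \leq T_{n,k}(x)-1$ for any legal move, by appending the indicator of the moved support to a packing of $x'$), your fact (i), and the base case $u(x)=0$ are all fine, and induction on $u$ is a legitimate skeleton.

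The genuine gap is that everything that constitutes the mathematical content of the theorem is deferred rather than done. Your ``fact (ii)'' is not even a precise statement, and no structural description of $T_{2k,k}$ (or of how the triple $(u,m,y)$ transforms under an arbitrary move, which may remove many tokens from each of the $k$ chosen piles, not just one) is established; without such a description you cannot verify either direction of the mex computation. Concretely, you never exhibit, for a position $x$ with $m(x)\geq z$ and a prescribed target $g'<\G(x)=(z-1)+(m-z)\bmod (y+1)$, a legal move whose follower $x'$ has tetris value $u(x')$, minimum $m(x')$ and reduced tetris value $y(x')$ producing exactly $g'$ under formula~(\ref{eq3}); hitting a prescribed residue class modulo $y'+1$ while simultaneously controlling two tetris values is precisely the delicate construction, and ``detaching a single $k$-vector and re-optimizing the packing'' does not by itself prescribe these quantities. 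Likewise the exclusion direction---that no follower of $x$ has SG value $\G(x)$, including followers that jump between the linear regime $m<z$ and the modular regime---is asserted to be ``the same bookkeeping as Theorem~\ref{t1}'' but this is exactly where the nonlinearity of $T_{n,k}$ (as opposed to the linear $u=\sum x_i$ of Theorem~\ref{t1}) must be confronted; in \cite{BGHMM15} these follower constructions and the analysis of how moves act on the tetris data occupy the bulk of the proof. As written, the proposal is a plausible plan, not a proof.
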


The game becomes even more difficult when  $n > 2k$.
No closed formula is known already for the $\P$-positions of {\sc Nim}$_{5, = 2}$.

\subsection{Tame, pet, and miserable games} \label{Ss-mis}
In Sections \ref{Ss.Nim} - \ref{Ss.Exact} above, we surveyed several variants of {\sc Nim},
but considered only their normal versions.
Now we will recall some results for the mis\`{e}re play.
A position $x$ from which there is no legal move is called {\em terminal}.
The SG functions  $\G$  and  $\G^-$ of the normal and mis\`{e}re versions
are defined by the same standard SG recursion, but are initialized differently:
$\G(x) = 0$, while  $\G^-(x) = 1$, for any terminal position  $x$.

A position  $x$  is called an  $i$-{\em position}
(resp.~$(i,j)$-{\em position}) if $\G(x) = i$ (resp.~$\G(x) = i$  and  $\G^-(x) = j$).
We will denote by $X_T$,  $X_i$, and  $X_{i,j}$  the sets of all terminals, $i$-,
and $(i,j)$-positions, respectively.
By definition, every terminal position is a $(0,1)$-positions,
$X_T \subseteq X_{0,1}$.

In 1976 John Conway \cite{Con76} introduced the so-called {\em tame} games, which
contain only  $(0,1)$-, $(1,0)$- and  $(t,t)$-positions.
The positions of the first two classes are called the {\em swap} positions.

A game is called {\em miserable} \cite{Gur07, Gur12} if
from each $(1,0)$-position there is a move to a $(0,1)$-position,
from each non-terminal $(0,1)$-position there is a move to a $(1,0)$-position, and
finally, from every non-swap position, either
there is a move to a $(0,1)$-position and
another one to a $(1,0)$-position, or
there is no move to a swap position at all.

\begin{proposition} [\cite{Gur07, Gur12}]
\label{gur07}
Miserable games are tame.
\qed
\end{proposition}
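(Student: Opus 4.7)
The natural approach is induction on the position $x$ along the game DAG (which is well-founded since the games considered are finite). I would phrase the inductive claim as: for every position $x$ reachable in a miserable game, $x$ is $(0,1)$, $(1,0)$, or $(t,t)$ for some $t\ge 0$. The base case is immediate, because terminal positions are $(0,1)$-positions by the defining convention $\G(x)=0$, $\G^-(x)=1$.

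For the inductive step, fix a non-terminal $x$ and assume every successor of $x$ is tame. Split on whether $x$ is a swap position. If $x$ is $(0,1)$ or $(1,0)$, there is nothing to prove; the content of the definition of miserable in these two sub-cases is used only later, when a position above $x$ has to exhibit a move to a swap successor of the appropriate colour. So the real work is to show that every non-swap position $x$ is of the form $(t,t)$, and for this one invokes the two alternatives in the definition of miserable.

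Alternative (a): $x$ has a move to a $(0,1)$-successor \emph{and} a move to a $(1,0)$-successor. Each contributes the value $0$ and the value $1$ to both the multiset of normal SG values and the multiset of mis\`ere SG values of the successors, so $\{0,1\}$ lies in each of these sets, hence $\G(x)\ge 2$ and $\G^-(x)\ge 2$. For any value $v\ge 2$, the inductive hypothesis forces $v$ to come only from $(v,v)$-successors, so $v$ appears among the normal successor values if and only if it appears among the mis\`ere successor values. Taking the mex of each set therefore gives the same integer $t\ge 2$, so $x$ is a $(t,t)$-position. Alternative (b): $x$ has no move to any swap position. Then every successor is of type $(s,s)$, so the normal and mis\`ere multisets of successor SG values literally coincide, and again $\G(x)=\G^-(x)=:t$, so $x$ is $(t,t)$.

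The step that requires the most care is alternative (a): one must be careful that $(0,0)$- or $(1,1)$-successors, which are allowed by the inductive hypothesis, do not spoil the argument. They contribute the same value to both multisets, so they only reinforce the inclusion $\{0,1\}\subseteq \{\G(y)\}\cap\{\G^-(y)\}$ and have no effect on values $\ge 2$; hence the mex computation is unaffected. The clauses of the definition dealing with swap $x$ (moves from $(1,0)$ to $(0,1)$ and from non-terminal $(0,1)$ to $(1,0)$) are not needed for tameness of $x$ itself; they are exactly what guarantees the inductive hypothesis for positions that sit above swap positions, so the whole induction goes through.
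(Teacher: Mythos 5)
Your induction is correct and is essentially the standard argument for this result: the paper itself states Proposition~\ref{gur07} only as a citation to \cite{Gur07, Gur12} without proof, and the proof given there proceeds by exactly this kind of induction over the game graph, showing each position is $(0,1)$, $(1,0)$, or $(t,t)$ by splitting on the two alternatives for non-swap positions. Your handling of the mex computation (values $\ge 2$ can only come from $(v,v)$-successors, and $(0,0)$- or $(1,1)$-successors are harmless) is sound, and your observation that the first two clauses of miserability are not needed for tameness itself is also accurate.
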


In \cite{Gur07} this result was applied to the game Euclid.
Many other applications are suggested in \cite{GurHo15}.
It appears to be also applicable to the Moore's {\sc Nim}.

\begin{proposition} [\cite{GurHo15}] \label{p4} 
Game {\sc Nim}$_{n, \leq k}$  is miserable.
Moreover, let $x = (x_1, \ldots, x_n)$ be a position in {\sc Nim}$_{n, \leq k}$
such that  for $2 \leq k < n$ and
let $l = l(x)$ be the number of non-empty piles in $x$. Then
\begin{itemize}  \itemsep0em
\item [\rm{(i)}]  $x \in X_{0,1}$ if and only if $x_i \leq 1$ for all $i$ and $l \equiv 0 \mod{(k+1)}$;
\item [\rm{(ii)}] $x \in X_{1,0}$ if and only if $x_i \leq 1$ for all $i$ and $l \equiv 1 \mod{(k+1)}$.
\end{itemize} \qed
\end{proposition}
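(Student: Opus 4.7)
The plan is to identify candidate swap classes $A$ (the $0$--$1$ positions with $l \equiv 0 \pmod{k+1}$) and $B$ (the $0$--$1$ positions with $l \equiv 1 \pmod{k+1}$), and then to verify the three conditions in the definition of miserable.

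First I would compute the SG values on $0$--$1$ positions. The set of $0$--$1$ positions is closed under moves of {\sc Nim}$_{n, \leq k}$: any legal move reduces between $1$ and $k$ of the $1$-entries to $0$. On this invariant set only $l$ matters, and the game reduces to the single-pile subtraction game with subtraction set $\{1, \ldots, k\}$. A direct induction on $l$ gives $\G(x) = l \bmod (k+1)$ and
\[
\G^-(x) \;=\; \begin{cases} 1 & \text{if } l \equiv 0 \pmod{k+1},\\ 0 & \text{if } l \equiv 1 \pmod{k+1},\\ r & \text{if } l \equiv r \pmod{k+1},\ r \in \{2,\ldots,k\}. \end{cases}
\]
Hence, among $0$--$1$ positions, $X_{0,1}$ is exactly $A$ and $X_{1,0}$ is exactly $B$; the remaining $0$--$1$ positions are tame $(r,r)$-positions with $r \geq 2$.

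Next I would prove that any position $x$ with at least one pile of size $\geq 2$ satisfies $\G(x) = \G^-(x)$, and hence is not a swap position, by induction on $\sum_i x_i$. Let $p$ denote the number of piles of size $\geq 2$ and $q$ the number of $1$-piles. If $p \geq k+1$, no move can reach a $0$--$1$ position, so every successor is non-$0$--$1$ and satisfies $\G = \G^-$ by induction, making the reachable $\G$- and $\G^-$-sets identical. If $p \leq k$, a move landing in a $0$--$1$ position must reduce all $p$ big piles; parameterising such moves by $m \in [p, \min(k, p+q)]$ and $s \in [0,p]$, the resulting count $l' = s + q + p - m$ fills the entire interval $[\max(0, p+q-k),\, p+q]$. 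Since $p \geq 1$, this interval contains both residue $0$ and residue $1$ modulo $k+1$, so both $A$- and $B$-positions are reachable. Consequently the $0$--$1$ successors contribute the same set of occurring values to the reachable $\G$- and $\G^-$-sets (the swap $(0,1) \leftrightarrow (1,0)$ on the residues $0$ and $1$ is inert once both appear), and non-$0$--$1$ successors contribute equal values to both sets by induction; hence $\G(x) = \G^-(x)$. Combined with the first step this yields $X_{0,1} = A$ and $X_{1,0} = B$.

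It remains to verify the three miserable conditions. From $x \in B$, reducing a single $1$-pile yields $l - 1 \equiv 0$, landing in $A$. From a non-terminal $x \in A$ (so $l \geq k+1$), reducing $k$ of the $1$-piles yields $l - k \equiv 1$, landing in $B$. For a non-swap $x$: if $x$ is $0$--$1$ with residue $r \in \{2,\ldots,k\}$, the moves with $m = r$ and $m = r-1$ land in $A$ and $B$; if $x$ has $1 \leq p \leq k$ big piles, the interval argument above hits both classes; if $p \geq k+1$, no move reaches a $0$--$1$ position, so none reaches $A \cup B$. The main obstacle is this last case: ruling out swap positions among non-$0$--$1$ configurations with $p \geq k+1$, where the candidate swap classes are unreachable in one step. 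Strengthening the inductive hypothesis from merely \emph{not swap} to the full tameness equation $\G = \G^-$ is what closes the induction, since this equation is preserved even when all successors stay inside the non-$0$--$1$ region.
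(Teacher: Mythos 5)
The paper does not actually prove Proposition~\ref{p4}: it is imported from \cite{GurHo15} and stated with no argument, so there is no internal proof to compare against; what can be compared is the machinery this paper uses for the analogous slow-game result, Proposition~\ref{p9}, which is proved via Lemma~\ref{Mis}. Your argument is correct and self-contained, and it takes a genuinely different route. Its two key ingredients check out: the $0$--$1$ positions are closed under moves and reduce to the subtraction game with subtraction set $\{1,\dots,k\}$, giving $\G = l \bmod (k+1)$ and the mis\`ere values with $0$ and $1$ swapped; and for a position with $1 \le p \le k$ piles of size at least $2$ and $q$ piles of size $1$, the $0$--$1$ successors realize every count $l'$ in the interval $[\max(0,p+q-k),\,p+q]$, which (being either an initial segment containing $0,1$ or a full residue system of length $k+1$) contains both residues $0$ and $1$ modulo $k+1$; together with the induction on the token sum this yields $\G(x)=\G^-(x)$ off the $0$--$1$ region, hence $X_{0,1}$ and $X_{1,0}$ are exactly the claimed classes $A$ and $B$, and your final case analysis verifies the definition of miserability. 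The difference from the paper's style is that your verification of the three miserable conditions on $A$ and $B$ is, in substance, a check of conditions (i)--(v) of Lemma~\ref{Mis} for candidate sets, so you could have invoked that lemma directly and dispensed with the tameness induction altogether (the lemma delivers miserability and the identification $V_{0,1}=A$, $V_{1,0}=B$ in one stroke); conversely, what your longer route buys is independence from the external lemma and a direct proof that every position with a pile of size at least $2$ satisfies $\G=\G^-$, i.e.\ tameness is obtained explicitly rather than as a corollary of miserability.
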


The normal and mis\`ere versions of the game {\sc Exact $k$-Nim} are related
in accordance with the following two statements.

Let $d(x)$ is the largest number of moves from $x$ to the terminal position.

\begin{proposition}  [\cite{BGHMM15}] \label{p5}
The game of {\sc Nim}$_{2k, = k}$ is miserable. Moreover, $x = (x _1, x_2, \ldots, x_n)$ is
\begin{itemize} \itemsep0em
\item  [\rm{(i)}]  a $(0,1)$-position if and only if $x_1 = x_2 = \cdots = x_{k+1} \leq 1$;
\item  [\rm{(ii)}] a $(1,0)$-position if and only if Tetris function $T_{2k,k}(x) = 1$.
\end{itemize}
\end{proposition}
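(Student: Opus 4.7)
The plan is to run a strong induction on $d(x)$, the length of the longest play from $x$, simultaneously establishing (i), (ii), and the three axioms of miserableness from Section~\ref{Ss-mis}. Set
\[
A = \{x : \text{after sorting,}\ x_1 = \cdots = x_{k+1} \leq 1\},
\qquad
B = \{x : T_{2k,k}(x) = 1\}.
\]
A short count gives the basic compatibility: $X_T \subseteq A$ (a terminal has at most $k-1$ nonempty piles, hence at least $k+1$ zeros), and $A \cap B = \emptyset$ (case~(a) of $A$ gives $T_{2k,k} = 0$; in case~(b) all $2k$ piles are nonempty, so two disjoint $k$-subsets yield two valid rows, forcing $T_{2k,k} \geq 2$).

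The decisive first step is that every legal move from $x \in B$ lands in a terminal position. A move subtracts a vector $a \geq 0$ with exactly $k$ positive entries; its $0/1$-support $v$ is a valid tetris row with $v \leq x$, so $T_{2k,k}(x-v) = 0$, and by monotonicity the follower $y = x - a$ satisfies $T_{2k,k}(y) = 0$, i.e., $y \in X_T$. Hence $\G(x) = 1$ and $\G^-(x) = 0$, giving $B \subseteq X_{1,0}$ and the miserable axiom for $(1,0)$-positions.

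For non-terminal $x \in A$ (so $x_1 = \cdots = x_{k+1} = 1$), pigeonhole forces any $k$-subset of piles to meet the $\geq k+1$ piles of value $1$: every follower $y$ has between $1$ and $k$ zeros, so $m(y) = 0$ and $y \notin A$. A move into $B$ is exhibited by picking any $k$ of the 1-piles and reducing each by $1$. Applying Theorem~\ref{t2} to any such follower (whose minimum is $0$) yields $\G(y) = T_{2k,k}(y)$; since $y$ has at least $k$ nonempty piles, $\G(y) \geq 1$, and $\G(y) = 1$ exactly when $y \in B$. By the inductive identification $B = X_{1,0}$, any such $y$ has $\G^-(y) = 0$, and inductive tameness from Proposition~\ref{gur07} rules out phantom $(1,1)$-type followers, so no follower contributes $\G^- = 1$. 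Therefore $\G(x) = 0$ and $\G^-(x) = 1$, i.e., $x \in X_{0,1}$, discharging the miserable axiom for non-terminal $(0,1)$-positions.

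For $x \notin A \cup B$ (so $T_{2k,k}(x) \geq 2$), a short case split on $r = 2k - q(x)$ (the number of zeros) and on the presence of piles of value $1$ constructs explicit moves into both $A$ and $B$, except in the ``all piles $\geq 2$'' case, where any move leaves at least $k$ untouched piles of value $\geq 2$, precluding both. This closes the remaining miserable axiom. The main obstacle, as anticipated, is the $(0,1)$-analysis for $x \in A$: verifying that no follower contributes $\G^- = 1$ beyond those in $A$ itself requires Theorem~\ref{t2} together with the structural fact $m(y) = 0$ to pin down $\G$-values equal to $1$, plus Proposition~\ref{gur07} to invoke tameness on smaller positions and exclude $(1,1)$-followers.
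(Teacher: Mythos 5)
The paper does not prove Proposition~\ref{p5} (it is quoted from \cite{BGHMMM15}\hspace{-3.5pt}\cite{BGHMM15}), so your attempt can only be judged against the natural route available here, namely Lemma~\ref{Mis}. Your key verifications are correct and are exactly the right ingredients: $X_T\subseteq A$, $A\cap B=\emptyset$, every move from a position with $T_{2k,k}=1$ lands in a terminal (hence $B\subseteq X_{1,0}$, $B$ is independent and movable to $A$), every non-terminal position of $A$ has all followers outside $A$ and some follower in $B$, and every position outside $A\cup B$ either moves into both $A$ and $B$ or (all piles $\geq 2$) into neither. These are precisely conditions (i)--(v) of Lemma~\ref{Mis} with $V'_{0,1}=A$, $V'_{1,0}=B$, and that lemma then delivers at one stroke both miserability and $V_{0,1}=A$, $V_{1,0}=B$ --- with no appeal to Theorem~\ref{t2} (which anyway covers only $2k\geq 4$) and no Sprague--Grundy computation. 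Your ``main obstacle'' thus dissolves if you use the tool the paper itself uses for Proposition~\ref{p9}.

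As written, however, your induction has a genuine gap at the positions with all piles $\geq 2$. For the ``only if'' halves of (i) and (ii) you must show such an $x$ is neither a $(0,1)$- nor a $(1,0)$-position, and your observation that $x$ has no move into $A\cup B$ does not give this; it only settles the third miserability axiom at $x$. Closing it inside your scheme requires the extra step that all followers of $x$, being outside $A\cup B$, are tame by the induction hypothesis, whence the normal and mis\`ere mex computations at $x$ agree and $\G(x)=\G^-(x)$; you never make this argument, and it is also what legitimizes your ``inductive tameness from Proposition~\ref{gur07}'' in the $A$-analysis --- without carrying the full package (including tameness of exactly these all-$\geq 2$ positions, and not merely ``$\G,\G^-\geq 1$'' for the move-to-both positions) through the induction, that invocation is circular. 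Both defects are repairable, most cleanly by replacing the whole inductive scaffolding with Lemma~\ref{Mis}; the explicit move constructions you sketch for the remaining case split (distinguishing by the number of zeros and the presence of $1$-piles) do exist and are routine, so the substance of your argument survives.
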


Let us remark that {\sc Nim}$_{n, = k}$  may be not tame
(and, hence, not miserable) when  $2k < n$, which
indicates indirectly that the game becomes much more difficult in this case.
For example, our computations show that $(1,2,3,3,3)$  is a  $(0,2)$-position of {\sc Nim}$(5, = 2)$.

\smallskip

In contrast, the case  $2k > n$  is much simpler;
in this case game {\sc Nim}$_{n, = k}$  satisfies even a stronger property.

In \cite{Gur12}, an impartial game was called {\em pet} if
it contains only $(0,1)$-, $(1,0)$-, and $(t,t)$- positions with  $t \geq 2$.
Recall that only  $t \geq 0$  is required for the tame games.
Hence, any pet game is tame.
Several characterizations of the pet games were given in \cite{Gur12}.
In particular, it was shown that the following properties of an impartial game are equivalent:

\smallskip

\rm{(i)} the game is pet,

\rm{(ii)}  there are no $(0,0)$-positions,

\rm{(iii)} there are no $(1,1)$-positions,

\rm{(iv)} from every non-terminal $0$-position there is a move to a $1$-position.

\smallskip

The last property was considered already in 1974
by Thomas Ferguson \cite{Fer74}, for a different purpose.
He proved that  (iv)  holds for the so-called subtraction games;
see \cite{Fer74} and also \cite{Gur12, GurHo15}
for the definition, proof, and more details.

\begin{proposition} [\cite{BGHMM15, GurHo15}]
\label{p6}
The game {\sc Nim}$_{n, = k}$  is pet whenever $n < 2k$.
\qed
\end{proposition}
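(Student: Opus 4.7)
The plan is to invoke Proposition \ref{p2} — under the hypothesis $n < 2k$ (equivalently $2k > n$) it identifies $\G(x)$ with the tetris function $T_{n,k}(x)$ — and then verify characterization (iv) of pet games listed immediately before the statement: from every non-terminal $0$-position there is a move to a $1$-position.

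The key intermediate step is the simple observation that $T_{n,k}(x) = 0$ if and only if $x$ is terminal. This I would read off directly from definition (\ref{tetris}): a legal move in {\sc Nim}$_{n, = k}$ requires $k$ simultaneously positive coordinates, so $x$ is terminal precisely when at most $k-1$ entries of $x$ are positive; the very same condition is what prevents fitting even a single $0/1$ vector with exactly $k$ ones under the constraint $\sum_i v^i_j \leq x_j$. Conversely, if at least $k$ entries $x_j$ are $\geq 1$, choosing a single vector $v_1$ with ones in any $k$ such coordinates already certifies $T_{n,k}(x) \geq 1$.

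Putting these two facts together, in the regime $n < 2k$ every non-terminal position satisfies $\G(x) = T_{n,k}(x) \geq 1$, so non-terminal $0$-positions simply do not exist. Condition (iv) is therefore vacuously true, and the equivalence (i)$\Leftrightarrow$(iv) from the characterization of pet games gives the result.

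There is no real obstacle: once Proposition \ref{p2} is assumed, the argument is essentially an unpacking of definitions. The only care needed is to distinguish the terminal positions (where $\G = 0$ but $\G^- = 1$, so they are $(0,1)$-positions, not $(0,0)$-positions) from hypothetical non-terminal $0$-positions, and to observe that the hypothesis $2k > n$ rules out the latter entirely.
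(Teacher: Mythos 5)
Your argument is correct. Note that the paper itself states Proposition \ref{p6} only as a cited result (with no proof given), so there is no in-paper argument to compare against; your derivation is a clean way to obtain it from the other surveyed facts. Specifically, the chain is sound: Proposition \ref{p2} gives $\G(x)=T_{n,k}(x)$ when $2k>n$; a position of {\sc Nim}$_{n,=k}$ is terminal exactly when fewer than $k$ piles are non-empty, which by definition (\ref{tetris}) is exactly the condition $T_{n,k}(x)=0$; hence every non-terminal position has $\G(x)\geq 1$, so there are no non-terminal $0$-positions (in particular no $(0,0)$-positions, since terminal positions are $(0,1)$-positions), and condition (iv) (equivalently (ii)) of the cited characterization of pet games applies. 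The only mild observation is that you invoke the full strength of $\G=T_{n,k}$, which is a heavier theorem than petness itself — the original reference could in principle argue more directly — but since Proposition \ref{p2} is explicitly available in the paper, this is a perfectly legitimate and economical route, and you correctly handled the one delicate point (terminal positions are $(0,1)$, not $(0,0)$).
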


Let us note that Moore's game {\sc Nim}$_{n, \leq k}$ is pet only when
$k = n$  \cite{BGHM15, BGHMM15}.
In this case the game is trivial and equivalent to
the standard  {\sc Nim}  with only one pile.


\section{Two versions of {\sc Slow $k$-Nim}} \label{s1}

In this paper, we introduce two games modifying
{\sc Nim}$_{n, \leq k}$  and {\sc Nim}$_{n, = k}$, respectively.
We keep all their rules, but add the following extra restriction:
by one move any pile can be reduced by at most one token.
We will call the obtained two games
by {\sc Slow Moore's $k-$Nim} and {\sc Slow Exact $k-$Nim}  and denote them by
{\sc Nim}$^1_{n, \leq k}$  and  {\sc Nim}$^1_{n, = k}$, respectively.
As usual, the player who makes the last move wins
the normal and loses mis\`{e}re version.

A position is a non-negative  $n$-vector $x =(x_1, \ldots, x_n)$, as before.
Without loss of generality, we assume that the coordinates are not decreasing, that is,
$0 \leq x_1 \leq \cdots \leq x_n$.
Yet, after a move  $x \to x'$
(from  $x$  to  $x'$) this condition may fail for  $x'$.
In this case we will reorder the coordinates of  $x'$ to reinforce it.

When  $k=1$, the games {\sc Nim}$^1_{n, \leq k}$ and {\sc Nim}$^1_{n, = k}$ are identical and trivial.
The SG function depends only on the parity of the total number $s = \sum_{i=1}^n x_i$  of tokens:
if  $s$  is even, $\G(x) = 0$, $\G^-(x) = 1$, and vice versa if  $s$  is odd.

Also {\sc Nim}$^1_{n, = k}$  is trivial when  $k = n$.
In this case any play consists of  $m = x_1$  
moves and hence the SG function depends only on the parity of  $m$:
if  $m$  is even, $\G(x) = 0$, $\G^-(x) = 1$, and vice versa if  $m$  is odd.

Thus, in all trivial cases considered above, both games are pet;
moreover, they have only  $(0,1)$- and  $(1,0)$-positions that
alternate with every move.

In Section \ref{S.Moore} we analyze the SG function of {\sc Slow Moore's Nim}
for  $n = 2$  and  $n =3$ and show that in these cases $\P$-positions
of the game are characterized by the parities of its coordinates. 
Then, we discuss miserability.
In Section \ref{S.Exact} we study  {\sc Slow Exact $k-$Nim} for $n = 3$ and
obtain closed formulas for the SG functions for both normal and mis\`{e}re version of Nim$^1_{3,\leq 2}$.

\medskip

Given a non-negative integer $n$-vector  $x = (x_1, \ldots, x_n)$  such that
$x_1 \leq \cdots \leq x_n$, its {\em parity vector} $p(x)$  is defined as
$n$-vector $p(x) = (p(x_1), \ldots, p(x_n))$ whose coordinates take values $\{e,o\}$
according to the natural rule:
$p(x_i) = e$  if  $x_i$  is even and $p(x_i) = o$  if  $x_i$  is odd.


\section{{\sc Slow Moore's $k-$Nim}} 
\label{S.Moore}

The status of a position  $x$  in {\sc Nim}$^1_{n, \leq k}$ is often uniquely determined
by its parity vector $p(x)$.
In particular, this is the case for  $n = k = 2$  and  $n = k+1 = 3$.
First, we will prove this claim and then characterize
the  $\P$-positions for several other cases.

\subsection{Sprague-Grundy values for $n = 2$  and $n = 3$}

\begin{proposition}  \label{p7}
The SG values  $\G(x)$   for the cases
$n = k = 2$ and $n = 3, k = 2$  are uniquely defined by
the parity vector  $p(x)$  as follows:
\begin{enumerate}
\item [\rm{(i)}]   For $n = k = 2$,
    \begin{align*}
    \G(x) =
    \begin{cases}
    0, \text{ if } p(x)  = (e,e); \\
    1, \text{ if } p(x)  = (e,o); \\
    2, \text{ if } p(x)  = (o,o); \\
    3, \text{ if } p(x)  = (o,e).
    \end{cases}
    \end{align*}
\item [\rm{(ii)}]  For $n = 3, k = 2$,
    \begin{align*}
    \G(x) =
    \begin{cases}
    0, \text{ if } p(x)  \in \{(e,e,e), (o,o,o)\}; \\
    1, \text{ if } p(x)  \in \{(e,e,o), (o,o,e)\}; \\
    2, \text{ if } p(x)  \in \{(e,o,o), (o,e,e)\}; \\
    3, \text{ if } p(x)  \in \{(e,o,e), (o,e,o)\}.
    \end{cases}
    \end{align*}
\end{enumerate}
\end{proposition}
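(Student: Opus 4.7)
The plan is to prove (i) and (ii) simultaneously by strong induction on $s(x) = x_1 + \cdots + x_n$. The base case is the terminal $x = (0,\ldots,0)$, where $\G(x) = 0$ and the parity vector is all $e$, matching the first line of each formula. For the inductive step, let $\cF(p)$ denote the SG value that the proposition assigns to parity vector $p$. Since every option $x'$ of $x$ satisfies $s(x') < s(x)$, the induction hypothesis gives $\G(x') = \cF(p(x'))$, and the task reduces to showing that for every position $x$ with $s(x) > 0$,
\[
\mathrm{mex}\{\cF(p(x')) : x \to x'\} \;=\; \cF(p(x)).
\]

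I would first carry out an abstract enumeration that ignores both empty piles and the re-sorting step. A move flips the parities of a subset $S \subseteq \{1,\ldots,n\}$ with $1 \leq |S| \leq k$. For $n=k=2$ the three admissible flip sets $\{1\}, \{2\}, \{1,2\}$ act as a bijection between the four parity classes $(e,e), (e,o), (o,o), (o,e)$, carrying $p(x)$ onto the three other classes; for $n=3,\,k=2$ an inspection of the eight parity vectors shows that the six admissible flip sets hit each parity class distinct from $p(x)$ exactly twice. In both cases the set of $\cF$-values of options is $\{0,1,2,3\} \setminus \{\cF(p(x))\}$, whose mex equals $\cF(p(x))$. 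This disposes of all positions in which every pile is nonzero and all coordinates are pairwise distinct.

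The remaining work is a boundary analysis, which I expect to be the main obstacle. Two effects disturb the abstract picture: (a) a pile with $x_i = 0$ cannot be reduced, so any flip set containing $i$ is unavailable; (b) when two coordinates coincide, reducing only one of them forces a re-sort, so the induced change on the sorted parity vector is a coordinate flip composed with a transposition and can coincide with a different abstract flip. For (i) the only extra cases are $x_1=0<x_2$ and $0<x_1=x_2$, each dispatched by a two-line mex computation. For (ii) the extra cases organize by the pattern of zeros and equalities in $(x_1,x_2,x_3)$, namely $x_1=x_2=0<x_3$, $0=x_1<x_2<x_3$, $0=x_1<x_2=x_3$, $0<x_1=x_2=x_3$, $0<x_1=x_2<x_3$, and $0<x_1<x_2=x_3$. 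In each such case the list of distinct options shrinks, but the key point to verify is that although some of the three ``expected'' SG values may drop out of the option set, the value $\cF(p(x))$ itself is never introduced, so the mex stays equal to $\cF(p(x))$. I would organize this final verification in a short table keyed by the multiset pattern of $(x_1,x_2,x_3)$, since both the set of available moves and their re-sort effects depend only on that pattern.
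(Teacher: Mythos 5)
Your overall strategy is essentially the paper's (fix the four parity classes, show for each position that the option values have the right mex), and your generic-case observation is correct and clean: when all piles are positive and pairwise distinct, no re-sorting occurs, the admissible parity flips realize every class other than $p(x)$, so the option values are exactly $\{0,1,2,3\}\setminus\{\cF(p(x))\}$ and the mex is $\cF(p(x))$. The genuine gap is in the boundary analysis, which is where essentially all of the content lies. You state that the key point to verify in the degenerate cases (zeros and ties) is only that the value $\cF(p(x))$ is never introduced into the option set, ``so the mex stays equal to $\cF(p(x))$.'' That inference is invalid: when moves disappear (a pile of size $0$ cannot be reduced) or collapse under re-sorting, option values can drop out, and if a value \emph{smaller} than $g=\cF(p(x))$ were lost, the mex would fall strictly below $g$ even though $g$ itself never appears. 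To conclude $\mathrm{mex}=g$ you must check two things in every degenerate pattern: that no option has value $g$, \emph{and} that each value $0,1,\dots,g-1$ is still attained by some surviving option. The second half is precisely what the paper's proof spends its effort on: for each class and each smaller value it exhibits an explicit move into that class which is guaranteed legal and re-sort-safe (e.g., reducing only coordinates whose parity forces them to be positive and strictly larger than their left neighbour). Concretely, for $x=(0,1,2)$ (parity $(e,o,e)$, claimed value $3$) only three of the six abstract flips are available, and one must actually confirm that the three surviving options still realize the values $0$, $1$, $2$; your stated criterion would accept a table in which they did not, as long as $3$ were absent.

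The plan is salvageable: the case tables you propose, keyed by the multiset pattern of $(x_1,x_2,x_3)$, would constitute a proof if each row were checked against the full two-sided condition (value $g$ absent from the options, all values below $g$ present), which is exactly conditions (1) and (2) in the paper's argument. But as written the acceptance criterion covers only half of what the mex computation requires, so the boundary verification --- the only nontrivial part of the proof --- has not actually been carried out.
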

\proof
We will prove only \rm{(ii)}. The proof of \rm{(i)} is similar, simpler,
and we leave it to the reader.

Let $S_0$ (resp.~$S_1$, $S_2$, $S_3$) be the set of positions whose parity vectors belong to

\smallskip
\noindent
$\{(e,e,e), (o,o,o)\}$ (resp.~$\{(e,e,o), (o,o,e)\}$, $\{(e,o,o), (o,e,e)\}$, $\{(e,o,e), (o,e,o)\}$.

\smallskip

Obviously, sets  $S_g, g \in \{0,1,2,3\}$, partition all positions of the game, since
there exist exactly  $2^3 = 8$  parity vectors and all are listed above.

\smallskip

Recall that the set of  $g$-position is defined by the following two claims:

\begin{enumerate}
\item  there exists no move between positions of $S_g$, for any fixed  $g \in \{0,1,2,3\}$;
\item  for any  $i,g \in \{0,1,2,3\}$ such that  $i < g$  and  $x \in S_g$, there is a move from $x$ to $S_i$,
in other words, $S_i$  is reachable from  $S_g$.
\end{enumerate}


Let us begin with (1). It is easily  seen that
a move from  $x$ to $y$  exists only if
these two positions have either of the following two properties:


\begin{itemize} \itemsep0em
\item [\rm{(a)}]  the parities in one of their three coordinates are the same, while the parities in the two other are opposite;
then,  $y$  might be reachable from  $x$  by reduction of these two ``other" piles; conversely,
\item [\rm{(b)}]  the parities in one of their three coordinates are opposite, while the parities in the remaining one are the say;
then,  $y$  might be reachable from  $x$  by reduction of these one ``remaining" pile.
\end{itemize}

It is easy to verify that
neither \rm{(a)} nor \rm{(b)} can hold for any two positions
that belong to the same set  $S_g$  for any fixed   $g \in \{0,1,2,3\}$.
Thus, $(1)$ holds.

\medskip

To show (2), we consider four cases assuming that
$x = (x_1, x_2, x_3) \in S_g$  for  $g \in \{0,1,2,3\}$.

If  $g = 1$, it is sufficient to reduce $x_3$  by one to get $y \in S_0$.

Let  $g = 2$.
If  $p(x) = (e, o, o)$, reduce $x_2$ and $x_3$  to get  $y \in S_0$ with $p(y) = (e,e,e)$ and
reduce  $x_2$ to get  $y \in S_1$  with $p(y) = (e,e,o)$.
If $p(x) = (o, e, e)$, reduce $x_1$ to get $y \in S_0$ with $p(x') = (e,e,e)$ and
reduce  $x_2$  to get  $y \in S_1$  with  $p(y) = (o,o,e)$.
Recall that the coordinates of a vector are assumed to be not decreasing.

\smallskip

Finally, let  $g = 3$.
If $p(x) = (e,o,e)$, then
reduce  $x_2$  to get  $y \in S_0$  with $p(y) = (e,e,e)$,
reduce $x_2$ and $x_3$  to get $y \in S_1$ with $p(y) = (e,e,o)$, and
reduce $x_3$  to get  $y \in S_2$ with $p(y) = (e,o,o)$.

If $p(x) = (o,e,o)$, then
reduce $x_2$  to get $y \in S_0$ with $p(y) = (o,o,o)$,
reduce $x_1$ to get  $y \in S_1$ with $p(x') = (e,e,o)$,
and reduce $x_1$ and $x_2$  to get  $y \in S_2$ with $p(y) = (e,o,o)$.
\qed

\subsection{$\P$-position for the cases $n \leq k+2$ and $n = k+3 \leq 6$}

In both these cases the $\P$-positions again are
simply characterized by the corresponding parity vectors.

\begin{proposition} \label{p8}
The $\P$-positions of game {\sc Nim}$^1_{n, \leq k}$ are uniquely defined
by their parity vectors as follows:  
\begin{enumerate} \itemsep0em
\item [$(1)$] for $n=k$ we have: $x \in \P$ if and only if $p(x) = (e, e, \ldots, e);$
\item [$(2)$] for $n=k+1$ we have: $x \in \P$ if and only if $p(x) \in \{(e, e, \ldots, e), (o, o, \ldots, o)\};$
\item [$(3)$] for $n=k+2$ we have: $x \in \P$ if and only if $p(x) \in \{(e, e, \ldots, e), (e,o \ldots, o)\};$
\item [$(4)$] for $n=5, k=2$ we have: $x \in \P$ if and only if

\smallskip
\noindent
$p(x) \in \{(e,e,e,e,e), (e,e,o,o,o), (o,o,e,e,o), (o,o,o,o,e)\};$
\item [$(5)$] for $n=6, k=3$ we have: $x \in \P$ if and only if

\smallskip
\noindent
$p(x) \in \{(e,e,e,e,e,e), (e,e,o,o,o,o), (o,o,e,e,o,o), (o,o,o,o,e,e)\}.$
\end{enumerate}
\end{proposition}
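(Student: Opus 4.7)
Denote by $\mathcal{S}$ the set of positions whose sorted parity vector is one of those listed in the statement. We use the standard characterization of $\P$: it suffices to verify (a) the terminal $(0,\ldots,0)$ lies in $\mathcal{S}$ (immediate, since $(e,\ldots,e)$ is always listed), (b) no legal move from a position of $\mathcal{S}$ leads back into $\mathcal{S}$, and (c) every position outside $\mathcal{S}$ admits a legal move into $\mathcal{S}$. A legal move reduces between $1$ and $k$ piles by one token, hence flips exactly the parities of the chosen coordinates; the resulting vector is then re-sorted, which may further permute the parity vector. It is precisely the interplay between these two effects that the proof must track.

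Cases (1)--(3) are handled by short flip-count and re-sorting arguments. In case (1) any move flips between $1$ and $n=k$ parities, so from all-even at least one odd appears; conversely, from any non-all-even position one reduces each of the (between $1$ and $k$) odd piles to reach all-even. In case (2) with $k=n-1$, flipping $0$ or $n$ parities is impossible, so the two $\mathcal{S}$-patterns cannot map into $\mathcal{S}$; from a mixed-parity position, reducing all odd piles lands in all-even. Case (3) with $k=n-2$ involves one delicate transition, $(e,o,\ldots,o)\to(e,o,\ldots,o)$, which would require flipping exactly one even $x_a$ and one odd $x_b$. But the new even $x_b-1$ cannot be the smallest coordinate of the re-sorted vector: since $x_a$ and $x_b$ have opposite parities and $x_b\geq x_a$, we get $x_b>x_a$, hence $x_b-1\geq x_a>x_a-1$, so $x_a-1$ (now odd) is strictly smaller than $x_b-1$ and the sorted parity begins with $o$. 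Reachability from outside $\mathcal{S}$ splits by the number of even coordinates: with $\geq 2$ evens one reduces all odd piles to reach $(e,\ldots,e)$; with exactly one even (not at position one) one reduces both the smallest pile and the unique even pile to reach $(e,o,\ldots,o)$; with no evens one reduces just the smallest pile to reach the same target.

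Cases (4) and (5) follow the same template but with considerably more bookkeeping. For (b) one separates $\mathcal{S}$-to-$\mathcal{S}$ transitions by the change in the parity multiset: those requiring more than $k$ flips are excluded at once, while the remaining multiset-preserving transitions (for instance $(e,e,o,o,o)\to(e,e,o,o,o)$ and $(e,e,o,o,o)\to(o,o,e,e,o)$ in case (4)) are ruled out by the same sorting argument as in case (3) --- the new even coordinate produced by flipping an odd pile $x_b$ cannot occupy the prescribed slot in the re-sorted vector, because the forced strict inequality between the flipped even and odd coordinates places $x_b-1$ at the wrong position. For (c) one enumerates every non-$\mathcal{S}$ parity vector and exhibits an explicit move, usually to the $\mathcal{S}$-pattern requiring the fewest flips, using that odd coordinates are always at least $1$ to guarantee legality. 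The main obstacle here is the sheer size of the case split --- roughly a dozen non-$\mathcal{S}$ parity vectors in each case, together with several multiset-preserving transitions to exclude --- but each sub-case reduces to an elementary parity-and-inequality check of the type already illustrated in case (3), so no new technique is required.
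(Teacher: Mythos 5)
Your overall skeleton (independence of the listed parity classes plus reachability from every other parity class) is the same as the paper's, and your treatment of cases (1)--(3) is sound. The concrete gap is in your part (b) for case (4). You filter the $\mathcal{S}$-to-$\mathcal{S}$ transitions by ``more than $k$ flips are excluded at once, the rest are multiset-preserving,'' but this dichotomy is not exhaustive there: the pattern $(o,o,o,o,e)$ has only one even coordinate while $(e,e,o,o,o)$ and $(o,o,e,e,o)$ have two, so a transition between them changes the number of even piles by exactly $1\le k=2$ (e.g.\ reduce a single even pile) and is therefore neither killed by your flip-count filter nor covered by your ``multiset-preserving'' bucket. As written, your plan never rules out the four ordered transitions between $\{(e,e,o,o,o),(o,o,e,e,o)\}$ and $(o,o,o,o,e)$, so independence of $\mathcal{S}$ is not established in case (4). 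These moves are in fact impossible, and they can be excluded by the same kind of inequality-after-sorting argument you use elsewhere (e.g.\ from $(e,e,o,o,o)$, reducing one even pile leaves the other even pile strictly below the untouched largest odd pile, so it cannot land in the last slot required by $(o,o,o,o,e)$), but that check is missing from the plan. A second, softer weakness is that for (c) in cases (4)--(5) you only assert that a routine enumeration works; the paper actually carries out this enumeration (split by the number of even coordinates), and that is where most of its effort lies.

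For comparison, the paper disposes of your entire part (b) in one line via a fact you did not isolate: if $y$ is obtained from the sorted $x$ by removing one token from each of $s$ piles and re-sorting, then $x_i-1\le y_i\le x_i$ for every $i$ (sorting is monotone), and since the total decrement is $s$, the sorted parity vector $p(y)$ differs from $p(x)$ in exactly $s$ positions, with $1\le s\le k$. Hence no move can preserve the sorted parity vector, and no move can connect two distinct listed patterns, because in every case of the proposition the pairwise Hamming distances of the listed patterns exceed $k$. Adopting this lemma would close the gap above automatically and eliminate all of your ad hoc ``which slot can $x_b-1$ occupy'' analyses, reducing your part (b) to the distance computation; your part (c) would then still need the explicit move constructions, essentially as in the paper's proof.
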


\proof
For each case, we have to verify the following two statements:

\begin{enumerate} \itemsep0em
\item [\rm{(i)}]   there is no move from  $x$  to  $y$
if both  positions $x$ and $y$  are in $\P$, and
\item  [\rm{(ii)}] for any $x \notin \P$, there is a move to a position $y \in \P$.
\end{enumerate}

Property \rm{(i)} is obvious for all five cases.
Indeed, it is enough to notice that
in each case, the Hamming distance between 
$p(x)$ and $p(y)$  is either  $0$
(if they coincide) or larger than the corresponding  $k$
(if they are distinct).
In both cases, there is no move from $x$ to $y$. 

\medskip

To verify \rm{(ii)}, consider five statements $(1) - (5)$ separately.
In each case, for every position  $x \not \in X$, we will construct a move to a position  $y \in \P$.

\medskip

To show  $(1)$  notice that
$x$ has odd coordinates whenever  $x \notin \P$.
Then, to move from  $x$ to a position $y \in \P$, it
is sufficient to reduce all odd coordinates of  $x$.

\medskip

To show $(2)$ notice that  $x$ has at least one even and at least one odd coordinate whenever $x \notin \P$.
If $x_1 = 0$, we reduce all odd coordinates 
to get a position $y$ with $p(y) = (e,e,\ldots,e)$.
If $x_1 > 0$, we reduce 
each coordinate whose parity is different from $p(x_1)$, to get a position $y$
whose all coordinates have the same parities as
$p(x_1)$, that is, either $(e, e, \ldots, e)$ or $(o, o, \ldots, o)$.

\medskip

Let us show $(3)$.
If all coordinates of $x$ are  odd, reduce $x_1$, to get a position $y$ with $p(y) = (e,o,o,\ldots,o)$.
Otherwise, let us consider two cases.
Let exactly one coordinate of  $x$ be  even.
It cannot be  $x_1$, since $x \notin \P$.
Reduce this even coordinate together with $x_1$ to get a position  $y$  with the parity vector $(e,o,o, \ldots,o)$.
If at least two coordinates of $x$  are even, reduce all odd coordinates  
to get a position  $y$  with the parity vector $(e,e,\ldots,e)$.

\medskip

To show  $(4)$ and $(5)$, let us consider the following cases.

If all coordinates of $x$ are odd, reduce $x_1$ and $x_2$  to get a position $y$ with $p(y) = (e,e,o,\ldots,o)$.

If exactly one coordinate  $x_i$  in  $x$  is even then it is easily seen that
$i \neq 5$ for the case $(4)$ and $i \neq 6$ for the case $(5)$.

\begin{itemize} \itemsep0em
\item If $i$ is odd, reduce $x_{i+1}$  
to get a position $y$ with \\
$p(y) \in \{(e,e,o,o,o), (o,o,e,e,o)\}$ for the case $(4)$ and \\
$p(y) \in \{(e,e,o,o,o,o), (o,o,e,e,o,o), (o,o,o,o,e,e)\}$ for the case $(5)$.
\newline
Recall that  $x_{i+1} \geq x_i+1$ and note that
the order of these two coordinates does not change after the above move.
\item Let $i$  be even.
If $i = 2$, reduce $x_1$ to get a position $y$ with
$p(y) = (e,e,o,o,o)$ for the case $(4)$ and $p(y) = (e,e,o,o,o,o)$  for the case $(5)$.
If $i = 4$, reduce the last $2$
(resp.~$3$) coordinates $x_4, x_5$
(resp.~$x_4, x_5, x_6$) to get a position $y$ with $p(y) = (o,o,o,o,e)$
(resp.~ $p(y) = (o,o,o,o,e,e)$) for case  $(4)$ (resp.~$(5)$).
\end{itemize}

If at least $3$ coordinates of  $x$  are even, reduce all odd coordinates to get a position  $y$ with $p(y) = (e,e,\ldots,e)$.

\medskip

It remains to consider the case when there are exactly two even coordinates $x_i, x_j$, $i \neq j$.
Note that $(i,j) \notin \{(1,2), (3,4), (5,6)\}$.
If $i$ is odd, reduce $x_{i+1}$ and $x_j$  to get a position $y$ with
$p(y) \in \{(e,e,o,o,o), (o,o,e,e,o)\}$ for the case $(4)$ and
$p(y) \in \{(e,e,o,o,o,o), (o,o,e,e,o,o), (o,o,o,o,e,e)\}$ for the case $(5)$.
If $i = 2$, reduce $x_i$ and $x_j$   
to get a position $y$ with $p(y) = (e,e,o,o,o)$
(resp.~$p(y) = (e,e,o,o,o,o)$) for the case $(4)$ (resp.~$(5)$).
If $i = 4$, reduce $x_i$ for case $(4)$ to get a position $y$ with $p(y) = (o,o,o,o,e)$, while
for case $(5)$, reduce 
$x_i$ and the unique odd coordinate of the last two, $x_5$ and $x_6$.
(one of them is $x_j$ which is even)
to get a position $y$ with $p(y) = (o,o,o,o,e,e)$.
\qed

\bigskip
\begin{sloppypar}
Our computations show that  Proposition \ref{p8}  cannot be extended to cover the case  $n \leq 6$, because
in the remaining subcases the  $\P$-positions are not uniquely characterized in terms of their parity vectors.
For example, if  $n = 6$ and $k = 2$  then  $(3,3,3,4,4,4)$  is a $\P$-position, while $(1,1,1,2,2,4)$  is a $2$-position,
although their parity vectors are the same;
also $(1, 3, 3, 3, 3, 3)$, $(1, 3, 3, 3, 5, 5)$, and  $(1, 3, 5, 5, 5, 5)$  are $\P$-positions,
while $(1, 1, 1, 1, 1, 3)$, $(1, 1, 1, 1, 1, 5)$ are $2$-positions, and $(1, 1, 3, 3, 5, 5)$  is a $3$-position.
\end{sloppypar}

\subsection{Miserability}
Now let us consider the mis\`ere version of  {\sc Nim}$^1_{n, \leq k}$.

\begin{proposition} \label{p9}
When $k \geq n-1$,  {\sc Slow Moore's Nim}$^1_{n, \leq k}$ is miserable. Furthermore,

\medskip
\noindent
$V_{0,1} = \{(0,0,\ldots,0,2j) | j \in \Z_{\geq}\}$ and $V_{1,0} = \{(0,0,\ldots,0,2j+1) | j \in \Z_{\geq}\}$ for $k = n$;

\medskip
\noindent
$V_{0,1} = \{(i,i,\ldots,i,i+2j) | i, j \in \Z_{\geq}\}$ and $V_{1,0} = \{(i,i,\ldots,i,i+2j+1) | i, j \in \Z_{\geq}\}$ for  $k = n-1$.
\end{proposition}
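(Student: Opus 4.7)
The plan is to verify directly the defining conditions of a miserable game (Section~\ref{Ss-mis}) for the candidate sets $V_{0,1}$ and $V_{1,0}$ as stated, and then to conclude by a simultaneous induction on $\sum_i x_i$ that $V_{0,1}=X_{0,1}$, $V_{1,0}=X_{1,0}$, and every non-swap position is a $(t,t)$-position; Proposition~\ref{gur07} then delivers miserability and tameness. Swap positions in both sub-cases are parametrised as $(i,\dots,i,i+m)$ with $m\geq 0$, and the parity of $m$ distinguishes $V_{0,1}$ from $V_{1,0}$.

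For $k=n$, any position in $V_{0,1}\cup V_{1,0}$ has at most one non-empty pile, and its unique legal move toggles the sole non-zero coordinate's parity, swapping $V_{0,1}\leftrightarrow V_{1,0}$; this handles the two swap-to-swap conditions. For a non-swap $x$, a move lands in $V_{0,1}\cup V_{1,0}$ iff at most one $x_i$ exceeds $1$. In that case, with $L=\{i:x_i=1\}$ and $i_0$ denoting the unique coordinate exceeding $1$ (if any), the two moves $S=L$ and $S=L\cup\{i_0\}$ (or $S=L$ and $S=L\setminus\{i\}$ for any $i\in L$, when no $i_0$ exists) produce swap positions whose last coordinates differ by $1$ and thus have opposite parities, hitting both $V_{0,1}$ and $V_{1,0}$. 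Otherwise at least two $x_i$ exceed $1$, a single move cannot empty both, and no swap position is reachable.

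For $k=n-1$, from any non-terminal swap position $(i,\dots,i,i+m)$ at least one of the moves $S=\{n\}$ (valid when $i+m\geq 1$) and $S=\{1,\dots,n-1\}$ (valid when $i\geq 1$) is legal and produces respectively $(i,\dots,i,i+m-1)$ or $(i-1,\dots,i-1,i+m)$, flipping the parity of $m$. That no move from $V_{0,1}$ returns to $V_{0,1}$ follows from Proposition~\ref{p8}(2): the parity vector of $V_{0,1}$ is uniform, a single move flips between $1$ and $n-1<n$ parities, so the image has a non-uniform parity vector and is not a $\P$-position. That no move from $V_{1,0}$ returns to $V_{1,0}$ follows from a short case analysis on $(|S\cap\{1,\dots,n-1\}|,[n\in S])$, inspecting the three-valued multiset of the image.

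The main obstacle is the non-swap clause for $k=n-1$: given a non-swap $x$ with a move $x\to y=(v,\dots,v,w)\in V_{0,1}\cup V_{1,0}$, one must exhibit a second move from $x$ to the opposite swap set. When $w>v$, the pile of $x$ contributing to $y_n=w$ has value $w$ or $w+1$ according as it is not or is included in $S$; toggling its inclusion yields a conjugate move $S'$ of size $|S|\pm 1$ whose image is $(v,\dots,v,w\pm 1)$ of the opposite parity class, and the only legality obstructions $|S'|=0$ or $|S'|=n$ both force $x$ itself into a swap shape, contradicting the non-swap hypothesis. When $w=v$, so $y=(v,\dots,v)\in V_{0,1}$, the multiset of $x$ is $\{v^{n-s},(v+1)^s\}$ with $s=|S|\in[2,n-1]$ (since $x$ is non-swap), and dropping any pile of value $v+1$ from $S$ produces a legal move of size $s-1$ with image $(v,\dots,v,v+1)\in V_{1,0}$. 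Once these conditions are verified, the simultaneous induction on $\sum_i x_i$ shows $\G\equiv 0$, $\G^-\equiv 1$ on $V_{0,1}$, $\G\equiv 1$, $\G^-\equiv 0$ on $V_{1,0}$, and $\G=\G^-$ on non-swap positions, completing the proof.
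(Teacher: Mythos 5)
Your verification of the structural conditions is, in substance, the paper's own proof: the paper checks exactly these properties for the same candidate sets (independence, terminal containment, mutual movability, and the ``both or neither'' clause for non-swap positions) and then concludes by invoking Lemma~\ref{Mis} from [GurHo15]. Your conjugate-move trick for the non-swap clause in the case $k=n-1$ (toggle, in or out of $S$, the pile that produces the top coordinate of the image, and observe that the only legality obstructions would force $x$ itself to have swap shape) is a correct and rather cleaner packaging of the paper's two-case analysis, and the $k=n$ case and the movability checks are fine (for the latter, note that when $m=0$ only the move reducing the first $n-1$ piles stays inside the family, which your ``at least one of the two moves'' phrasing just covers).

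The gap is in your last step, where instead of citing Lemma~\ref{Mis} you assert that a simultaneous induction on $\sum_i x_i$ now yields $\G\equiv 0$, $\G^-\equiv 1$ on $V_{0,1}$, etc. The conditions you verified do not by themselves drive that induction. Take $k=n-1$, $n\geq 3$, and $x=(i,\dots,i,i+2j)\in V_{0,1}$ with $i\geq 1$: among its followers are non-swap positions such as $y=(i-1,i,\dots,i,i+2j)$. To get $\G(x)=0$ and $\G^-(x)=1$ you must know that every such non-swap follower satisfies $\G(y)=\G^-(y)\geq 2$; but your induction hypothesis only gives $\G(y)=\G^-(y)$, and nothing you verified bounds this common value away from $0$ or $1$ (in particular, your clause (v) allows the branch ``no move from $y$ into $V_{0,1}\cup V_{1,0}$,'' in which case the induction says nothing about the value of $y$, and a $(0,0)$-follower would wreck the computation of $\G(x)$). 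What is missing --- and what is true for this game --- is the additional fact that every non-swap follower of a candidate swap position has a move back into $V_{0,1}\cup V_{1,0}$: such a $y$ has multiset $\{(i-1)^a,\; i^{\,n-1-a},\; i+m-\epsilon\}$ with $1\leq a\leq n-2$, $\epsilon\in\{0,1\}$, and reducing the remaining $n-1-a$ piles of size $i$ (legal since $i\geq 1$ and $n-1-a\leq n-1$) lands in the family $(i-1,\dots,i-1,\,i-1+m')$; combined with clause (v) and the induction hypothesis this forces $\G(y),\G^-(y)\geq 2$ and closes the induction. Either supply this extra step, or simply quote Lemma~\ref{Mis} as the paper does, after which your verification of its five conditions is all that is needed.
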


The proof uses the following lemma from \cite{GurHo15} characterizing miserability.

Given a game  $G$  and two sets  $V', V''$ of its positions, we say that
$V'$ is {\em movable} to  $V''$ if for every position
$x' \in V'$   there is a move to a position $x'' \in V''$.


\begin{lemma} [\cite{GurHo15}] \label{Mis}
A game  $G$ is miserable if and only if there exist two disjoint sets  $V'_{0,1}$ and $V'_{1,0}$
satisfying the following five conditions:
\begin{enumerate}\itemsep0em
\item [\rm{(i)}]   Both sets  $V'_{0,1}$ and $V'_{1,0}$ are independent, that is, there is no legal move
between two positions of one set.
\item [\rm{(ii)}]  $V'_{0,1}$ contains all terminal positions, that is, $V_T \subseteq V'_{0,1}$.
\item [\rm{(iii)}] $ V'_{0,1} \setminus V_T$  is movable to $V'_{1,0}$.
\item [\rm{(iv)}]  $V'_{1,0}$  is movable to  $V'_{0,1}$.
\item [\rm{(v)}]   From every position $x \notin V'_{0,1} \cup V'_{1,0}$, either there is a move to $V'_{0,1}$ and
another one to $V'_{1,0}$ or there is no move to  $V'_{0,1} \cup V'_{1,0}$.
\end{enumerate}

Moreover,  $V'_{0,1} = V_{0,1}$ and $V'_{1,0} = V_{1,0}$  whenever the above five conditions hold.
\qed
\end{lemma}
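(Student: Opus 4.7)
The plan is to apply Lemma~\ref{Mis}: for each of the two cases $k=n$ and $k=n-1$, define the candidate swap sets $V'_{0,1}$ and $V'_{1,0}$ to coincide with the sets in the proposition statement, and verify the five miserability conditions (i)--(v). The lemma then simultaneously establishes miserability and the identifications $V'_{0,1}=V_{0,1}$, $V'_{1,0}=V_{1,0}$.

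For $k=n$, the swap set $V'_{0,1}\cup V'_{1,0}$ consists of positions $(0,\dots,0,m)$ with at most one non-empty pile. Conditions (i)--(iv) are immediate: from any non-terminal such position the only legal move decrements the unique non-empty pile by one, flipping the parity of $m$, so the two classes alternate, and the terminal $(0,\dots,0)$ lies in $V'_{0,1}$. For condition (v), consider a non-swap position $x$. If $x_{n-1}\ge 2$, a single decrement cannot zero out every coordinate other than $x_n$, so no move lands in the swap set. If $x_{n-1}\le 1$, then the first $n-1$ coordinates are $\{0,1\}$-valued with at least one entry equal to $1$; decrementing exactly these $1$-valued entries yields $(0,\dots,0,x_n)$, while additionally decrementing $x_n$ (which is at least $1$ since $x_n\ge x_{n-1}\ge 1$) yields $(0,\dots,0,x_n-1)$, giving moves to the two opposite classes.

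For $k=n-1$ the union $V'_{0,1}\cup V'_{1,0}$ consists of all positions whose first $n-1$ sorted coordinates share a common value $i$, with the parity of $x_n-i$ selecting the class. Condition (ii) is trivial. For (iii), decrementing only the last pile of $(i,\dots,i,i+2j)$ yields $(i,\dots,i,i+2j-1)\in V'_{1,0}$ when $j\ge 1$; when $j=0$ and $i\ge 1$, decrementing the first $n-1$ piles produces $(i-1,\dots,i-1,i)\in V'_{1,0}$; condition (iv) is analogous. For (i), a short case analysis on the decremented subset $S\subseteq\{1,\dots,n\}$ (split according to whether $n\in S$) shows that from a swap position any move either leaves the swap set entirely or flips the parity of $x_n-i$ and hence lands in the opposite class. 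The main obstacle is condition (v): for a non-swap $x$ with $x_1<x_{n-1}$, I claim a move reaches the swap set if and only if $x_{n-1}=x_1+1$. Indeed, if $x_{n-1}\ge x_1+2$ a single decrement still leaves the smallest and $(n-1)$th sorted coordinates differing by at least one; if $x_{n-1}=x_1+1$, let $B$ be the non-empty set of first-$(n-1)$ piles of value $x_1+1$ (so $1\le|B|\le n-2$ since both $x_1$ and $x_1+1$ are attained among the first $n-1$ coordinates), then decrementing precisely $B$ flattens those coordinates to $x_1$, while additionally decrementing $x_n$ toggles the parity of the resulting last coordinate; both moves are legal because $|B|+1\le n-1=k$, and they land in the two opposite swap classes.
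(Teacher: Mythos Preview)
Your write-up is not a proof of Lemma~\ref{Mis}: you \emph{apply} the lemma rather than establish it. What you have actually written is a proof of Proposition~\ref{p9}. The paper itself does not prove Lemma~\ref{Mis} either (it is quoted from \cite{GurHo15} with a bare \qed), so there is no paper-proof of that statement to compare against.

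Treating your intended target as Proposition~\ref{p9}, your argument is correct and follows the paper's overall plan of verifying conditions (i)--(v) of Lemma~\ref{Mis} for the declared candidate sets. The main organisational difference is in condition~(v) for $k=n-1$: you first characterise precisely which non-swap positions admit \emph{any} move into the swap set (those with $x_{n-1}=x_1+1$) and then exhibit moves to both classes, whereas the paper assumes a move to one class exists and constructs a move to the other via a longer case analysis on which coordinates were reduced. Your route is shorter and makes the dichotomy in~(v) transparent. You also supply the case $k=n$ that the paper leaves to the reader, and you correctly handle the $j=0$ sub-case of condition~(iii): from the diagonal position $(i,\dots,i)$ with $i\ge 1$, decrementing only the last pile does \emph{not} land in $V'_{1,0}$ for $n\ge 3$, and one must instead decrement the first $n-1$ piles, as you do; the paper's one-line justification of~(iii) glosses over this.
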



{\bf Proof of Proposition \ref{p9}.}
For both cases  
$k=n$  and   
$k=n-1$, we will show that
the sets defined by this proposition satisfy
all conditions of Lemma \ref{Mis}, thus,
proving miserability and getting all swap positions.

We will consider only case  $k=n-1$, leaving the simpler one  $k = n$ to the reader.

Set
$V'_{0,1} = \{(i,i,\ldots,i,i+2j) | i, j \in \Z_{\geq}\}$
\newline
and
$V'_{1,0} = \{(i,i,\ldots,i,i+2j+1) | i, j \in \Z_{\geq}\}$.

Clearly, a move from a position $(i,i,\ldots,i,i+2j)$ reduces 
at most $n-1$ first or last coordinates and,
hence, cannot lead to a position $(i',i',\ldots,i',i'+2j')$.

Therefore, \rm{(i)} holds for $V'_{0,1}$.
Similarly, \rm{(i)} holds for $V'_{1,0}$.

Clearly, $V'_{0,1}$ contains the (unique) terminal position, with no tokens at all, and hence, \rm{(ii)} holds.

Conditions \rm{(iii)} and \rm{(iv)} hold, since moves
$(i,i,\ldots,i,i+2j)  \to (i,i,\ldots,i,i+2j-1)$ and
$(i,i,\ldots,i,i+2j+1) \to  (i,i,\ldots,i,i+2j)$, reducing the last coordinate, are legal.

Let us check \rm{(v)}.
Assume that the part
``there is no move from $x$  to  $V'_{0,1} \cup V'_{1,0}$"  fails and show that
there is a move from  $x$  to   $V'_{0,1}$ and another one to  $V'_{1,0}$.

Let us first assume that there is a move $\mathfrak{m}$ from $x$  to  $V'_{0,1}$  and
show that there is also a move from $x$ to  $V'_{1,0}$.
Consider two cases: 
\begin{enumerate} \itemsep0em
\item [(a)] Case 1: move $\mathfrak{m}$ reduces   
$x_1, x_2, \ldots, x_{n-1}$.  
In other words, $x = (i+k_1,i+k_2,\ldots, i+k_{n-1},i+2j)$ for some $i,j$ and
$0 \leq k_l \leq 1$ with $k_1 + k_2 + \cdots + k_{n-1} > 0$.
Note that $j \geq 1$, since there is a coordinate $i+k_j > i$.
If $k_l = 1$ for all $l$ then $x \in V'_{1,0}$, which is a contradiction.
Therefore, without loss of generality, we can assume that $k_1 = 0$.
In this case, move $x \to (i,i,\ldots, i,i+2j-1)$ terminates in $V'_{1,0}$.
\item [(b)] Case 2: move $\mathfrak{m}$ reduces 
the last coordinate  $x_n$. 
Without loss of generality, assume that this move 
reduces the last $n-1$
coordinates  $x_2, x_3, \ldots, x_n$, that is,
$x = (i,i+k_1,\ldots, i+k_{n-2},i+2j+k_{n-1})$ for some $i,j$ and
$0 \leq k_l \leq 1$ with $k_1 + k_2 + \cdots + k_{n-1} > 0$.
Note that $j \geq 1$ when $k_{n-1} = 0$ and also that
there is $l_0 \leq n-2$ such that $k_{l_0} = 1$, since otherwise $x \in V'_{0,1} \cup V'_{1,0}$.
Let us now consider $k_{n-1}$.
If $k_{n-1} = 0$  (resp.~$k_{n-1} = 1$) then
move $x \to (i,i,\ldots, i,i+2j-1)$ terminates in $V'_{1,0}$ (resp.~ in $V'_{1,0}$).
Note that the latter move is legal since $k_{l_0} = 1$.
\end{enumerate}

It remains to assume that there is a move from  $x$  to $V'_{1,0}$ .
and to show that there is a move from  $x$  to  $V'_{0,1}$ too.
The arguments are essentially the same as above and we leave this case to the reader.

Thus, by Lemma \ref{Mis}, the game is miserable;
moreover, $V'_{0,1}$  and $V'_{1,0}$
are its swap positions, that is,  $V_{0,1} = V'_{0,1}$  and $V_{1,0} = V'_{1,0}$.
\qed

\medskip

Our computations show that
{\sc Nim}$^1_{4, \leq 2}$ is not tame:
for example, $(1, 1, 2, 3)$  is a  $(4, 0)$-position.
Unlike the normal version, for the mis\`ere one
we have no simple characterization even for the  $\P$-positions.


\section{{\sc Slow exact $k$-{\sc Nim}} ({\sc Nim}$^1_{n, =k}$)}  \label{S.Exact}
As we already mentioned, the game is trivial when  $k=1$ or $k=n$.
In both cases there are only  $(0,1)$- and $(1,0)$-positions, which
alternate with every  move.
Whether  $x$  is a $(0,1)$- or a $(1,0)$-position depends on only the parity.
More precisely, $\G(x) = q(x) \pmod 2$, where
$q(x) = \sum_{i=1}^n x_i$  when  $k=1$  and
$q(x) = m(x) = \min_{i=1}^n x_i = x_1$  when  $k=n$;
in both cases  $\G^-(x) + \G(x) = 1$.

We will show that both the mis\`ere and normal versions of the game are tractable when  $n=3$ and $k=2$.
Again the parity vector plays an important role, although
in this case it does not define the SG function uniquely.

\subsection{On Sprague-Grundy function of {\sc Nim}$^1_{3,=2}$}
\begin{proposition} \label{p10}

Let us set
\begin{align*}
A   &= \{(2a, 2b-1, 2(b+i)) |     0 \leq a < b,    0 \leq i < a, (a+i)   \bmod{2} = 1\}, \\
B   &= \{(2a,2b,2(b+i)+1) |       0 \leq a \leq b, 0 \leq i < a, (a+i)   \bmod{2} = 1\}, \\
C_0 &= \{(2a-1, 2b-1, 2(b+i)-1) | 0 \leq a \leq b, 0 \leq i < a, (a+i)   \bmod{2} = 0\}, \\
C_1 &= \{(2a-1, 2b-1, 2(b+i)-1) | 0 \leq a \leq b, 0 \leq i < a, (a+i)   \bmod{2} = 1\}, \\
D_0 &= \{(2a-1, 2b, 2(b+i)) |     0 \leq a < b,    0 \leq i < a, (a+i) \bmod{2} = 1\}, \\
D_1 &= \{(2a-1, 2b, 2(b+i)) |     0 \leq a < b,    0 \leq i < a, (a+i) \bmod{2} = 0\}, \\
C   &= C_1 \cup C_2, D  = D_1 \cup D_2,
\end{align*}

The SG function of game {\sc Nim}$^1_{3,=2}$ takes only four values, $0,1,2,3$
and the sets of its  $i$-positions are as follows:

\begin{align*}
S_0 &= (\{(2a,2b,c) \mid 2a \leq 2b \leq c\} \setminus B) \cup A \cup C_0 \cup D_0, \\
S_1 &= (\{(2a,2b+1,c) \mid 2a \leq 2b+1 \leq c\} \setminus A) \cup B \cup C_1 \cup D_1, \\
S_2 &= \{(2a+1,2b+1,c) \mid 2a+1 \leq 2b+1 \leq c\} \setminus C, \\
S_3 &= \{(2a+1,2b,c) \mid 2a+1 \leq 2b \leq c\} \setminus D.
\end{align*}
\end{proposition}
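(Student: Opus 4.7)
The plan is to verify the three standard conditions that characterize a decomposition of positions by SG-value: (i) the four sets $S_0, S_1, S_2, S_3$ partition the sorted-triple cone $\{0 \leq x_1 \leq x_2 \leq x_3\}$ and $S_0$ contains every terminal (a triple with at most one non-zero coordinate); (ii) no legal move of Nim$^1_{3,=2}$ joins two positions of the same $S_g$; and (iii) from every non-terminal $x \in S_g$ with $g \geq 1$ there is a legal move to some position in $S_j$, for every $j < g$. Together, (i)--(iii) force $\G(x) = g$ whenever $x \in S_g$ and, in particular, imply that $\G$ takes only the values $0, 1, 2, 3$.

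The essential preliminary observation is that a single move subtracts $1$ from exactly two coordinates of the unsorted triple, and hence flips the parity in those two coordinates; the sorted parity vector may undergo a further apparent swap only when a decremented coordinate crosses one of opposite parity. With this in hand, I would first settle the ``bulk'' cases --- those positions whose $S_g$-membership is determined by $p(x)$ alone --- by a parity count: no move from a parity class assigned to $S_g$ lands back in the same class, while for each $j < g$ an appropriate choice among the three possible reduce-pairs $(x_1,x_2), (x_1,x_3), (x_2,x_3)$ produces a parity vector assigned to $S_j$. The partition condition (i) is then essentially visible from the parity vectors of the exceptional families --- $(e,o,e)$ for $A$, $(e,e,o)$ for $B$, $(o,o,o)$ for $C_0, C_1$, $(o,e,e)$ for $D_0, D_1$ --- combined with the congruence $(a+i) \bmod 2$ which separates $C_0$ from $C_1$ and $D_0$ from $D_1$.

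The substantive work lies in the exceptional families. For each of $A, B, C_0, C_1, D_0, D_1$ I would parametrize a generic element, list each legal move, track how the parameters $a, b, i$ and the parity of $a + i$ transform (including any re-sort swap that occurs when the decremented coordinate changes place --- for example, a move from $B$ with $a = b$ produces a sorted triple of type $(o,e,e)$ falling into $D$, whereas with $a < b$ the image falls into $A$), and verify (ii) and (iii) by direct check. The main obstacle is precisely this bookkeeping: the congruence $(a+i) \bmod 2$ interacts with the three move-choices in a nontrivial way, because reducing $x_3$ decrements $i$, reducing the pair $(x_2, x_3)$ decrements $b$ and hence re-indexes $i$, and reducing $x_1$ decrements $a$, each of which can flip $a+i$ modulo $2$. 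I would organize the casework as a table indexed by (source exceptional family) $\times$ (choice of reduced pair) $\times$ (re-sort or no re-sort); each cell then checks one implication of (ii) or (iii), making the finite enumeration mechanical.
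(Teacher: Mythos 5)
Your skeleton is the same as the paper's: show that $S_0,\dots,S_3$ partition the positions, that each $S_g$ is move-independent, and that every $x\in S_g$ has a move into $S_j$ for each $j<g$; and your parity signatures for the exceptional families ($(e,o,e)$ for $A$, $(e,e,o)$ for $B$, $(o,o,o)$ for $C$, $(o,e,e)$ for $D$) are correct. The gap is your division of labour: you claim the ``bulk'' positions are disposed of by a pure parity count and that the substantive checks concern only moves \emph{out of} the six exceptional families (your table is indexed by source exceptional family). But $S_0$ and $S_1$ are not unions of parity classes, so the dangerous moves have a \emph{bulk} source and a potential \emph{exceptional} target in the same $S_g$. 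For instance, a move from $p=(2a,2b,c)\notin B$ (bulk of $S_0$) lands in a position of parity $(o,o,o)$, $(o,e,e)$ or $(e,o,e)$ --- exactly the classes containing $C_0$, $D_0$ and $A$, which are again in $S_0$. Excluding these landings is not a parity count: one must prove implications of the form ``$p\notin B$ forces $q\notin C_0$ (resp.\ $\notin D_0$, $\notin A$)'' by translating membership into the constraints $0\le i<a$ and $(a+i)\bmod 2$, and this is the core of the paper's verification of independence of $S_0$ (and, dually, of $S_1$). The same phenomenon governs movability: from a bulk position $(2a,2b+1,c)\notin A$ or $(2a+1,2b+1,c)\notin C$ or $(2a+1,2b,2(b+i))$ with $i\ge a+1$, the natural parity-fixing move toward $S_0$ may a priori land in $B\subset S_1$, and ruling that out again uses the source's exclusion from $A$ (resp.\ $C$, resp.\ the inequality $i-1\ge a$), not parity alone; moreover in the remaining subcase ($i\le a$, so the source lies in $D_1$) no bulk target works and the move must be aimed \emph{into} $C_0$.

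So as written the plan omits precisely the cells that carry the mathematical content. To repair it, index the casework by all eight parity classes split into bulk versus exceptional membership (not only by exceptional source family), and in each cell derive the required membership or non-membership in $A,B,C_0,C_1,D_0,D_1$ of the move's target from the defining inequalities and the congruence on $a+i$, choosing the move (which pair to reduce, and occasionally which of two candidate targets) according to those parameters. Once the table is enlarged in this way the enumeration is indeed finite and mechanical, and it reproduces the paper's argument; without it, the independence of $S_0,S_1$ and the movability statements are asserted but not proved.
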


\begin{proof}
It is easily seen that these four sets
partition the set of all positions of {\sc Nim}$^1_{3,=2}$
In addition, we will verify the following four statements that
immediately imply that  $S_i$  is the set of  $i$-positions, by definition of SG function.

\begin{enumerate} \itemsep0em
\item each set  $S_i$ is independent, that is, there is no move between any two of its positions;
\item if $p \notin S_0$ then $p$ is movable to $S_0$;
\item if $p \notin S_0 \cup S_1$ then $p$ is movable to $S_1$;
\item if $p \notin S_0 \cup S_1 \cup S_2$ then $p$ is movable to $S_2$.
\end{enumerate}

\begin{enumerate} \itemsep0em
\item Case $i = 0$.
Let  $p \in S_0$.
Assume that  $p \in (\{(2a,2b,c) \mid 2a \leq 2b \leq c\} \setminus B)$.
There are three types of moves from $p$.
\begin{enumerate} \itemsep0em
\item [\rm{(i)}] $p \to (2a-1,2b-1,c) = q$.
By the parity vector of $q$, we have  $q \notin \{(2a,2b,c) \mid 2a \leq 2b \leq c\} \cup A \cup D_0$.
Since $p \notin B$, one can derive that  $q \notin C_0$.
Indeed, assume that $q \in C_0$.
Then $c = 2(b+i)-1$ for some $i$ such that $0 \leq i < a$ and $(a+i) \bmod{2} = 0$.
Note that $i \geq 1$, since $c \geq 2b$. We have $p = (2a, 2b, 2(b+i)-1) = (2a,2b,2(b+i-1)+1)$, where
$0 \leq i-1 < a$ and $(a+i-1) \bmod{2} = 1$.
The last formula for $p$ implies that $p \in B$, resulting in a contradiction.
Thus, $q \notin C_0$.
It follows that $q \notin S_0$.
\item [\rm{(ii)}] $p \to (2a-1,2b,c-1) = q$.
If $c-1 = 2b-1$, $q = (2a-1,2b-1,2b) \notin S_0$ as shown in the case \rm{(i)}.
If $c-1 \geq 2b$  then  $q \notin \{(2a,2b,c) \mid 2a \leq 2b \leq c\} \cup A \cup C_0$.
Since $p \notin B$, one can conclude that $q \notin D_0$.
Therefore, $q \notin S_0$.
\item [\rm{(iii)}] $p \to (2a,2b-1,c-1) = q$.
If  $2a > c-1$ then  $p = (2a,2a,2a)$ and, hence, $q \notin S_0$, as shown in case \rm{(i)}.
Assume that $2a \leq c-1$.
If $2a > 2b-1$ then $p = (2a,2a,c)$ and, hence, $q \notin S_0$ as shown in case \rm{(ii)}.
Thus, we can assume that $2a \leq 2b-1$ and, hence, $2a < 2b-1$.
In other words, the first (smallest) coordinate of $q$ is $2a$ and the second is $2b-1$.
Therefore, $q \notin \{(2a,2b,c) \mid  2a \leq 2b \leq c\} \cup C_0 \cup D_0$.
Since $p \notin B$, one can verify that $q \notin A$.
Thus, $q \notin S_0$.
\end{enumerate}

Similarly, one can show that there is no move from $p$ to $S_0$, when $p \in A \cup C_0 \cup D_0$.

The case $i = 1$ is similar to the case $i = 0$ and we leave to the reader.

If $i = 2$  then a move from a position $(2a+1,2b+1,c) \in S_2$  results
in a position with the first or the second even coordinate.
Clearly, such a move cannot terminate in $S_2$.

If $i = 3$  then a move from $(2a+1,2b,c) \in S_3$ will change the parity of
either the first or the second coordinate without changing any order. 
Clearly, such a move cannot terminate in $S_3$.
\item Let $p \notin S_0$.
Consider the following four cases: 

\begin{enumerate} \itemsep0em
\item [\rm{(i)}]   $p = (2a,2b,c)$  such that  $2a \leq 2b \leq c$.
Then, $p \in B$, that is, $p = (2a,2b,2(b+i)+1)$ for some $i$ such that $0 \leq g <  a$ and $(a+i) \bmod 2 = 1$.
If $a = b$ then move $p \to (2a-1,2b,2(b+i))$ terminates in $D_0 \subset S_0$;
if $a < b$ then move $p \to (2a,2b-1,2(b+i))$ terminates in $A \subset S_0$.
\item [\rm{(ii)}]  $p = (2a,2b+1,c)$ with $2a \leq 2b+1 \leq c$.
Consider the move $p \to (2a,2b,c-1) = q$.
Let us show that $p \notin A$ implies that $q \notin B$.
Assume that $q \in B$.
Then, $c - 1 = 2(b+i)+1$ for some $i$ such that $0 \leq i < a$ and $(a+i) \bmod{2} = 1$.
We set  $c = 2(b+1+i) = 2(b'+i)$ with $b' = b+1$ and conclude that $p = (2a,2b'-1,2(b'+i))$.
By the restrictions  for  of $i$, we have $p \in A$, which is a contradiction.
Thus, $q \notin B$, implying also that $q \in S_0$.
\item [\rm{(iii)}] $p = (2a+1,2b+1,c)$ with $2a+1 \leq 2b+1 \leq c$.
Consider a move $p \to (2a,2b,c) = q$.
Since $p \notin C_0$, similarly to the argument of \rm{(ii)}, we prove that $q \notin B$ and, hence, $q \in S_0$.
\item [\rm{(iv)}]  $p = (2a+1,2b,c)$ such that $2a+1 \leq 2b \leq c$.
If $c = 2(b+i)+1$ then move $p \to (2a,2b,2(b+i))$  terminates in $S_0$.
Let  $c = 2(b+i)$. 
If $i \geq a+1$, consider move $p \to (2a,2b,2(b+i)-1) = (2a,2b,2(b+i')+1) = q$ such that  $i' = i-1$.
Since $i' \geq a$, we have $q \notin B$ and, hence, $q \in S_0$.
If $i \leq a$, set  $p = (2a'-1,2b,2(b+i))$, where  $0 \leq i < a' = a+1$.
We have $p \in D$.
Furthermore, since $p \notin S_0$, we have  $p \notin D_0$ and, hence,  $p \in D_1$,
implying that $(a' + i) \bmod{2} = 0$.
Considering move $p \to (2a'-1,2b-1,2(b+i)-1)$, we conclude that  $q \in C_0 \subset S_0$.
\end{enumerate}
\item  If  $p \notin S_0 \cup S_1$  then either $p \in S_2$ or $p \in S_3$. Consider both cases.
\begin{enumerate} \itemsep0em
\item [\rm{(i)}] $p \in S_2$.
Then $p = (2a+1,2b+1,c)$ for some $a,b,c$. We  examine the parity of $c$.
If $c$ is even or, equivalently, $c = 2(b+i)$, we consider move $p \to (2a, 2b+1,2(b+i)-1) = q$.
Note that $q \notin A$ since its third coordinate is odd. Hence, $q \in S_1$.
If $c$ is odd, since $p \notin C$, we have  $c = 2(b+i+1)-1$ for some $i \geq a+1$.
In fact, if $c = 2(b+i+1)-1$ for some $i < a+1$, defining $a'=a+1, b'=b+1$, we get  $p = (2a'-1,2b'-1,2(b'+i)-1) \in C$,
 which is a contradiction.
 Now consider move $p \to (2a,2b+1,2(b+i)) = (2a,2(b+1)-1,2(b+1+i-1)) = q$.
 Since $i-1 \geq a$, $q \notin A$ and, hence,  $q \in S_1$.
 \item [\rm{(ii)}] $p \in S_3$.
 Then $p = (2a+1,2b,c)$ for some $a,b,c$.
 Again, we examine $c$.
 If $c$ is odd or, equivalently, $c = 2(b+i)+1$, we consider the move $p \to (2a, 2b-1,2(b+i)+1) = q$.
 Note that $q \notin A$ since its third coordinate is odd.
 Hence, $q \in S_1$.
 If $c$ is even then $c=2(b+i)$ for some $i \geq a+1$.
 In fact, if $i < a+1$, by setting  $p = (2(a+1)-1,2b,2(b+i))$, we get  $p \in D$, which is a a contradiction.
 Now consider move $p \to (2a,2b-1,2(b+i)) = q$. Since $i > a$, we have $q \notin A$ and, hence, $q \in S_1$.
 \end{enumerate}

\item Let $p \notin S_0 \cup S_1 \cup S_2$.
Then $p \in S_3$ and, hence, $p = (2a+1,2b,c)$ for some $a,b,c$; yet,  $p \notin D$.
Examine $c$ again.
If $c$ is odd or, equivalently, $p = (2a+1,2b,2(b+i)+1)$, we consider move $p \to (2a+1,2b-1,2(b+i)) = q$.
Since the third coordinate of $q$ is even, $q \notin C$ and, hence, $q \in S_2$.
If $c$ is even, $c = 2(b+i)$ for some $i \geq a+1$.
In fact, if $i < a+1 = a'$ then  $p = (2a'-1,2b,2(b+i)) \in D$, which is a contradiction.
Now consider move $p \to (2a+1,2b-1,2(b+i)-1) = (2a'-1,2b-1,2(b+i)-1) = q$ such that $i \geq a+1 = a'$.
Note that $q \notin C$ and, hence, $q \in S_2$.
\end{enumerate}
\end{proof}
\begin{remark}
For  $n = k+1 = 4$, our computations indicate that the SG function still takes only four values
$\{0,1,2,3\}$, corresponding to the parity vectors
$\{(e,e,*,*), (e,o,*,*), (o,o,*,*), (o,e,*,*)\}$, respectively.
Yet, the structure of exceptions is more complicated than in case  $n=k+1=3$  and
can hardly be characterized by simple closed formulas, like in Proposition \ref{p10}.
\end{remark}
\begin{remark}
For  $n = k+1 = 3$ the SG function takes only values  $\{0,1,2,3\}$, because
in this case there are at most three moves from each position.
The corresponding number is ``typically" $4$ when  $n = k+1 = 4$, yet, no $4$-position was found.
However, larger SG values can be taken when  $n = k+1 > 4$.
For example, our computations show that
$(1, 2, 3, 3, x_5)$  for  $3 \leq x_5 \leq 7$
are  $5$-positions of the game {\sc Nim}$^1_{5, =4}$.
\end{remark}

\subsection{On the Sprague-Grundy function of mis\`{e}re {\sc Nim}$^1_{3,=2}$}
\begin{proposition}
\label{p11}
Let us set
\begin{align*}
A_1 &= \{(2a,2b-1,2b+4i)        \mid a = 2a', 0 \leq i < a'\}, \\
A_2 &= \{(2a,2b-1,2b+4i+2)      \mid a = 2a'+1, 0 \leq i < a'\}, \\
B_1 &= \{(2a,2b,2b+4i+1)        \mid a = 2a', 0 \leq i < a'\}, \\
B_2 &= \{(2a,2b,2b+4i+3)        \mid a = 2a'+1, 0 \leq i < a'\}, \\
C_0 &= \{(2a-1, 2b-1, 2(b+i)-1) \mid 0 \leq i < a-1, (a+i) \bmod{2} = 0\}, \\
C_1 &= \{(2a-1, 2b-1, 2(b+i)-1) \mid 0 \leq i < a-1, (a+i) \bmod{2} = 1\}, \\
D_0 &= \{(2a-1, 2b, 2(b+i))     \mid 0 \leq i < a-1, (a+i) \bmod{2} = 1\}, \\
D_1 &= \{(2a-1, 2b, 2(b+i))     \mid 0 \leq i < a-1, (a+i) \bmod{2} = 0\}, \\
E   &= \{1, 2b-1, 2b-1          \mid b \geq 1 \}, \\
F   &= \{1, 2b, 2b              \mid b \geq 1 \}, \\
A   &= A_1 \cup A_2, B = B_1 \cup B_2, C = C_1 \cup C_2, D  = D_1 \cup D_2,
\end{align*}

The SG function of the of mis\`{e}re version of {\sc Nim}$^1_{3,=2}$
takes only four values, $0,1,2,3$
and the sets of its  $i$-positions are as follows:

\begin{align*}
S_0 &= (\{(2a,2b,c)   \mid 2a \leq 2b \leq c\} \setminus B) \cup A \cup C_0 \cup D_0 \cup E, \\
S_1 &= (\{(2a,2b+1,c) \mid 2a \leq 2b+1 \leq c\} \setminus A) \cup B \cup C_1 \cup D_1 \cup F, \\
S_2 &= \{(2a+1,2b+1,c)\mid 2a+1 \leq 2b+1 \leq c\} \setminus (C \cup E), \\
S_3 &= \{(2a+1,2b,c)  \mid 2a+1 \leq 2b \leq c\} \setminus (D \cup F).
\end{align*}
\end{proposition}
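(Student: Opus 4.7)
The strategy mirrors the proof of Proposition \ref{p10}, with the extra ingredient of the misère initialization $\G^-(x)=1$ at terminals. Concretely, one must verify: (i) $S_0, S_1, S_2, S_3$ partition the set of positions; (ii) every terminal $(0,0,c)$ is classified consistently with $\G^-=1$; (iii) each $S_i$ is independent, i.e., no legal move connects two of its positions; and (iv) for each $i\in\{1,2,3\}$ and each $p\in S_i$, the position $p$ admits moves into $S_j$ for every $j<i$. Once (i)--(iv) are established, the equality $\G^-(x)=i$ on $S_i$ follows by the standard induction on the game DAG.

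I would begin with (i) and (ii), which amount to inspecting the parity vector of each position and, in the boundary cases, locating it in exactly one of the auxiliary sets $A,B,C_0,C_1,D_0,D_1,E,F$. Relative to Proposition \ref{p10}, the key accounting change is that the range of $i$ in the definitions of $C_0, C_1, D_0, D_1$ has been tightened from $i<a$ to $i<a-1$; the positions removed by this tightening (those with $a=1$) are precisely those captured by the new sets $E$ and $F$. A direct check shows that these reallocations preserve the partition.

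For (iii) and (iv) I would follow the template of the proof of Proposition \ref{p10}: for each $S_i$, a three-move case analysis (one of the three pairs of coordinates is reduced by one) for independence, combined with a parity-vector analysis to produce the required moves for movability. The crucial new phenomenon is the misère ``swap'' near the terminals: positions $(1,2b-1,2b-1)$ that in the normal version lay in $C_1\subset S_1$ have migrated into $E\subset S_0$, and positions $(1,2b,2b)$ that lay in $D_0\subset S_0$ have migrated into $F\subset S_1$. Every sub-case of the normal-version argument whose successor lies close to these boundary positions must therefore be rechecked against the new labelling, and the exclusion clauses $\setminus(C\cup E)$ in $S_2$ and $\setminus(D\cup F)$ in $S_3$ must be invoked to prevent unwanted intra-set moves.

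The main obstacle I anticipate is precisely this bookkeeping. The case analysis is finite and mechanical, but the interaction between the tightened range $i<a-1$ and the exclusions involving $E,F$ forces one to split several parity sub-cases whenever $a$ is close to $1$; these are exactly the positions where the normal and misère SG-values differ. Once the boundary behaviour is sorted out, the remainder of the argument is a nearly verbatim adaptation of the corresponding steps in the proof of Proposition \ref{p10}, and no genuinely new ideas are required beyond careful casework.
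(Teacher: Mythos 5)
The paper offers no actual argument here: its ``proof'' of Proposition~\ref{p11} is the single sentence that it is essentially similar to the proof of Proposition~\ref{p10} and is left to the reader. Your outline --- check that $S_0,\ldots,S_3$ partition the positions, that each $S_i$ is independent, that every $p\in S_i$ has moves to each $S_j$ with $j<i$, and that the mis\`ere initialization holds at the terminals --- is exactly that intended route, so in strategy you and the paper coincide.

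However, the concrete bookkeeping claim on which your plan rests is wrong, and it matters. Tightening the range in $C_0,C_1,D_0,D_1$ from $i<a$ to $i<a-1$ removes \emph{all} triples with $i=a-1$, not only those with $a=1$; the sets $E$ and $F$ absorb only the $a=1$ (i.e.\ $i=0$) part of what is removed. For $a\geq 2$ the positions $(2a-1,2b-1,2(b+a)-3)$ and $(2a-1,2b,2(b+a)-2)$, which in Proposition~\ref{p10} lay in $C_1\subset S_1$ and $D_0\subset S_0$ respectively, now fall into $S_2$ and $S_3$: these are precisely the non-tame $(1,2)$- and $(0,3)$-positions of Corollary~\ref{c4}, and they are nowhere near the terminal region. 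Consequently the ``crucial new phenomenon'' is not confined to the swap at $x_1=1$: the independence of $S_2$ and $S_3$ and the movability of these former $C$/$D$ positions into $S_0$ and $S_1$ require sub-cases with no counterpart in the normal-play argument, and your assertion that ``these reallocations preserve the partition'' because the removed positions ``are precisely those captured by $E$ and $F$'' is false as stated. In addition, your step (ii) is the right instinct but, carried out literally against the displayed sets, it fails: a terminal $(0,0,c)$ lies in the base of $S_0$ (the sets $A,B,E,F$ are empty for $a=0$), while $\G^-(0,0,c)=1$; more generally the whole stratum $x_1=0$ is swapped (cf.\ Corollary~\ref{c3}(1)) and is not encoded by $E,F$, so the $a=0$ and $a=1$ boundary strata must be treated separately before the Proposition~\ref{p10} template can be transcribed.
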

\begin{proof}
It is essentially similar to the proof of Proposition \ref{p10} and we leave to the reader.
\end{proof}

\subsection{Swap, non-swap, and non-tame positions}
The relatively simple closed formulas
obtained in Propositions \ref{p9} and \ref{p10}, respectively,
for the normal and mis\`ere versions of {\sc Nim}$^1_{3, =2}$
allow us to characterize the swap positions of the game as follows.

\begin{corollary} \label{c3}
Consider position $x = (x_1, x_2, x_3)$ with $0 \leq x_1 \leq x_2 \leq x_3$.
\begin{enumerate} \itemsep0em
\item [$(1)$] If  $x_1 = 0$ then $x$  is a  $(0,1)$-position
$($respectively,~ $(1,0)$-position$)$ whenever $x_2$ is even $($resp.~odd$)$.
\item [$(2)$] If  $x_1 = 1$ then $x$  is a  $(0,1)$-position
$($respectively,~$(1,0)$-position$)$ whenever $x_2 = x_3$ is even $($resp.~$x_2 = x_3$ is odd$)$.
\item [$(3)$] For  $x_1 \geq 2$, let us set $x_1 = a, x_2 = b$, and $x_3 = b+i$.
Then,  $x = (a,b,b+i)$  is a swap position if and only if $i \pmod 2 \neq a \pmod 2$, where
$i < a$ when $a$ is even  and $i < a-2$ when $a$ is odd.

Furthermore, in this case $\G(x) = b + \lceil \frac {i}{2} \rceil$ (while $\G^-(x) = 1 - \G(x)$).

\end{enumerate}
\end{corollary}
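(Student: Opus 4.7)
The plan is to obtain Corollary~\ref{c3} by intersecting the explicit classifications in Propositions~\ref{p10} and~\ref{p11}. Writing $S_j$ for the normal $j$-position set and $\widehat S_j$ for the mis\`ere one, a position is a $(0,1)$-position iff it lies in $S_0\cap\widehat S_1$, and a $(1,0)$-position iff it lies in $S_1\cap\widehat S_0$. The three cases of the corollary should fall out of this comparison.

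Cases $(1)$ and $(2)$, where $x_1\le 1$, I would handle directly rather than through the intersection. If $x_1=0$, no legal move can ever touch pile~$1$, so the game-tree from $x$ is a path of length $x_2$ down to the terminal $(0,0,x_3-x_2)$; hence $\G(x)=x_2\bmod 2$ and $\G^-(x)=(x_2+1)\bmod 2$, and every such position is a swap whose type is governed by the parity of $x_2$. If $x_1=1$, the mis\`ere exception sets $E=\{(1,2b-1,2b-1)\}$ and $F=\{(1,2b,2b)\}$ of Proposition~\ref{p11} pick out exactly those $x$ whose mis\`ere class differs from their normal class; cross-checking with Proposition~\ref{p10} locates these in $S_1$ and $S_0$ respectively, yielding $(2)$. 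All other positions with $x_1=1$ and $x_2<x_3$ lie in the regular parts of both partitions, so $\G(x)=\G^-(x)$ and no swap arises.

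Case $(3)$ with $a=x_1\ge 2$ is the substance of the claim. Here I would juxtapose the exception sets $A$, $B$, $C_0$, $C_1$, $D_0$, $D_1$ of Proposition~\ref{p10} with the mis\`ere counterparts $A_1\cup A_2$, $B_1\cup B_2$, $C_0$, $C_1$, $D_0$, $D_1$ of Proposition~\ref{p11}. The sets $C_j$ and $D_j$ shrink from the range $0\le i<a$ to $0\le i<a-1$ when passing from the normal to the mis\`ere version, so positions sitting at the boundary are classified differently; similarly, $A$ and $B$ are reparameterized in the mis\`ere case so that the parity of $\alpha+i_{\mathrm{prop}}$ is reversed, which again creates a boundary of disagreement. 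The swap positions are exactly the members of the symmetric difference of the normal and mis\`ere exception families; translating these back into the parameters $(a,b,i)$ yields the parity condition $i\not\equiv a\pmod 2$ together with the bound $i<a$ when $a$ is even and $i<a-2$ when $a$ is odd. The value of $\G(x)$ on each swap position (and hence $\G^-(x)=1-\G(x)$) is then read off by tracing which of $S_0$ or $S_1$ the position belongs to as $b$ and $i$ vary.

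The main obstacle is the bookkeeping: the exception sets involve several overlapping parity conditions on $a$, $b$, and $a+i$, and the mis\`ere definitions split according to the parity of $a$, so a clean proof requires organizing the verification into subcases indexed by $(a\bmod 2,\,b\bmod 2)$ and, within each, by $i\bmod 2$. In each subcase one checks membership in the correct swap class and confirms the stated SG value. No new game-theoretic argument is required beyond Propositions~\ref{p10} and~\ref{p11}; everything is a direct consequence of the partitions they supply.
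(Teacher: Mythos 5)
Your overall route coincides with the paper's: the printed proof of Corollary~\ref{c3} is literally ``it follows directly from Propositions~\ref{p10} and~\ref{p11}'', and your treatment of cases $(1)$ and $(2)$ (the forced-path argument for $x_1=0$, and playing the mis\`ere exceptions $E,F$ against the normal classes $C_1\subset S_1$, $D_0\subset S_0$ for $x_1=1$) is correct and, for $(1)$, even cleaner than going through the partitions.

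The gap is in case $(3)$ for $a=x_1$ odd. Your key structural claim --- that the swap positions are exactly the symmetric difference of the normal and mis\`ere exception families, with the $C_j,D_j$ differing only by the shrunken range --- fails there. In the corollary's coordinates ($a$ odd, $i=x_3-x_2$ even), the normal $C/D$ families reach $i\le a-1$ while the mis\`ere ones reach only $i\le a-3$; their symmetric difference is the single layer $i=a-1$, and a position lying in the normal family but outside the mis\`ere range has $\G^-=2$, so this ``boundary of disagreement'' produces precisely the $(0,2)$- and $(1,2)$-positions of Corollary~\ref{c4} ($x_1$ odd, $x_1+x_2=x_3+1$), not swaps. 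Carried out faithfully, your plan would therefore output $i=a-1$ as the odd-$a$ swap locus instead of the stated ``$i$ even, $i<a-2$''. The genuine odd-$a$ swaps lie in the common range $i\le a-3$ and are swaps because the normal and mis\`ere classifications assign \emph{opposite} values there; but this is incompatible with Proposition~\ref{p11} as literally printed, whose parity conditions on $C_0,C_1$ (and $D_0,D_1$) agree with the normal ones. For example $(3,3,3)$ is a $(0,1)$-position by direct computation ($\G^-(2,2,3)=0$, hence $\G^-(3,3,3)=1$, while $\G(3,3,3)=0$), yet the printed mis\`ere $C_0$ places it in $S_0$, giving $(0,0)$. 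So ``reading off'' the corollary as you propose either yields the wrong odd-$a$ set or surfaces an inconsistency in Proposition~\ref{p11} (its mis\`ere parity labels on $C$ and $D$) that must be identified and corrected first; until that is done, part $(3)$ for odd $a$ --- and the accompanying value formula, which in any case can only be meant modulo $2$ --- is not established by your argument. The even-$a$ half of $(3)$, where the $A/B$ families are genuinely parity-reversed and disjoint so that the symmetric difference is a union, does go through as you describe.
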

\proof
It follows directly from Propositions \ref{p10} and \ref{p11}.
\qed

A position  $x$  is called {\em tame} if $\G(x) = \G^-(x)$.
Recall that a game is tame if and only if it has only tame and swap positions.
A game is called {\it domestic} \cite{GurHo15} if
it has neither $(0,k)$ nor $(k,0)$ position for  $k \geq 2$.
The following result shows that {\sc Nim$^1_{3,=2}$} is not domestic.
Moreover, we can characterize the positions that are neither tame nor swap.

\begin{corollary} \label{c4}
A position $x = (x_1, x_2, x_3)$  such that
$0 \leq x_1 \leq x_2 \leq x_3$ is neither swap nor tame if and only if
$x_1$ is odd, $x_1 \neq 1$, and  $x_1 + x_2 = x_3 + 1$.
Under these conditions, $x$ is a $(0,3)$-position $($resp.~$(1,2)$-position$)$
if and only if $x_2$ is even $($resp.~odd$)$.
\end{corollary}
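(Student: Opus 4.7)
The plan is to compare Propositions \ref{p10} and \ref{p11} directly: read off the pair $(\G(x), \G^-(x))$ from the respective partitions $\{S_0, S_1, S_2, S_3\}$ of the normal and mis\`ere games, determine which positions have $\G(x) \neq \G^-(x)$, and then peel off the swap positions (pairs $(0,1)$ and $(1,0)$) already classified by Corollary \ref{c3}. What remains will be the set of non-swap non-tame positions described in the claim.

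A direct inspection reveals three sources of discrepancy between the two partitions. First, the extra sets $E$ and $F$ in Proposition \ref{p11} move the positions $(1, 2b-1, 2b-1)$ and $(1, 2b, 2b)$ between $S_0$ and $S_1$, producing only swap pairs $(1,0)$ and $(0,1)$; these coincide with case (2) of Corollary \ref{c3}. Second, the reparametrized sets $A_1, A_2, B_1, B_2$ of Proposition \ref{p11} differ from the normal $A, B$ by a parity shift and therefore reshuffle certain positions within the pair $\{S_0, S_1\}$; these also contribute only $(0,1)/(1,0)$ pairs and match case (3) of Corollary \ref{c3}. Third and crucially, the normal sets $C_0, C_1, D_0, D_1$ impose $0 \leq i < a$ while their mis\`ere counterparts impose the stricter $0 \leq i < a-1$. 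Thus the ``boundary'' value $i = a-1$, which requires $a \geq 2$, produces positions that lie in the normal $C$ or $D$ but not in the mis\`ere ones.

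When $i = a - 1$, one automatically has $(a + i) = 2a - 1$ odd, so the boundary positions fall in $C_1^{\text{normal}}$ or $D_0^{\text{normal}}$. Reading off the values, a boundary $C$-position $(2a-1, 2b-1, 2(b+a-1)-1)$ with $a \geq 2, a \leq b$ satisfies $\G = 1$ (from $C_1 \subset S_1^{\text{normal}}$) and $\G^- = 2$ (since it is not in $C^{\text{mis}}$ and not in $E$, as $x_1 \geq 3$, so it survives in the base of $S_2^{\text{mis}}$). Symmetrically, a boundary $D$-position $(2a-1, 2b, 2(b+a-1))$ gives $\G = 0$ (from $D_0 \subset S_0^{\text{normal}}$) and $\G^- = 3$. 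Both families satisfy $x_1 = 2a - 1 \geq 3$ odd together with $x_1 + x_2 = x_3 + 1$, with $x_2$ odd (resp.~even) producing the $(1,2)$ (resp.~$(0,3)$) class. Conversely, any triple $(x_1, x_2, x_3)$ with $x_1$ odd, $x_1 \geq 3$, and $x_3 = x_1 + x_2 - 1$ admits the parametrization $a = (x_1+1)/2$, $i = a - 1$, placing $x$ in one of the two boundary families depending on the parity of $x_2$.

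The main obstacle is not the boundary computation itself but the bookkeeping needed to exclude other sources of non-swap disagreement: one must verify that every $A$/$B$ reparametrization merely permutes positions within $\{S_0, S_1\}$ (hence yielding only swap pairs), and that positions left untouched by all three discrepancies appear in the same $S_i$ in both propositions and are therefore tame. This reduces to matching the normal $A, B$ with the mis\`ere $A_1 \cup A_2$, $B_1 \cup B_2$ parity-class by parity-class, which is routine once the underlying bijection on the index $i$ is spelled out.
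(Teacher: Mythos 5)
Your proposal is correct and follows essentially the same route as the paper, whose proof of Corollary~\ref{c4} is simply the assertion that it follows directly from the closed formulas of Propositions~\ref{p10} and~\ref{p11}; you merely supply the case-by-case comparison (the $A$/$B$ and $E$/$F$ discrepancies yielding only swap or tame positions, and the boundary index $i=a-1$ in $C$ and $D$ with $a\geq 2$ yielding exactly the $(1,2)$- and $(0,3)$-positions) that the paper leaves implicit. One small remark: the case $i=a-1$ exists already for $a=1$, but those positions are precisely $E$ and $F$ and hence swap, as your first observation handles, so the restriction to $a\geq 2$ in the boundary computation is justified.
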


\proof
It results directly from Propositions \ref{p9} and \ref{p10}.
\qed

\bibliographystyle{amsplain}

\begin{thebibliography}{15}

\bibitem{Ber62}
C. Berge, The theory of graphs, London, 1962.

\bibitem{BCG01-04}
E.R. Berlekamp, J.H. Conway, R.K. Guy,
Winning ways for your mathematical plays,
vol.1-4, second edition, A.K. Peters, Natick, MA, 2001 - 2004.

\bibitem{BGHM15}
E. Boros, V. Gurvich, N.B. Ho, K. Makino,
{\sc Extended complementary Nim},
RUTCOR Research Report 1 2015, Rutgers University;
submitted, available at \rm{http://arxiv.org/abs/1504.06926}.

\bibitem{BGHMM15}
E. Boros, V. Gurvich, N.B. Ho, K. Makino, and P. Mursic,
On the Sprague-Grundy function of {\sc Exact $k$-Nim},
RUTCOR Research Report 2-2015, Rutgers University;
submitted, available at \rm{http://arxiv.org/abs/1508.04484}.

\bibitem{Bou901}
C.L. Bouton, Nim, a game with a complete mathematical theory,
Ann. of Math., 2-nd Ser. 3 (1901-1902) 35-39.

\bibitem{Con76}
J.H. Conway,  On numbers and games,
Acad. Press, London, New York, San Francisco, 1976.

\bibitem{Fer74}
T.S. Ferguson, On sums of graph games with last player loosing,
Int. J. of Game Theory 3 (1974) 159-167.

\bibitem{Gru39}
P.M. Grundy, Mathematics of games, Eureka 2 (1939) 6-8.

\bibitem{GS56}
P.M. Grundy, C.A.B. Smith,
Disjunctive games with the last player loosing,
Proc. Cambridge Philos. Soc., 52 (1956) 527-523.

\bibitem{Gur07}
V. Gurvich,
On the mis\`ere version of game Euclid and miserable games,
Discrete Mathematics 307 (9-10) (2007) 1199-1204.

\bibitem{Gur12}
V. Gurvich,
On tame, pet, miserable and strongly miserable impartial games,
RUTCOR Research Report 18-2012, Rutgers University.

\bibitem{GurHo15}
V. Gurvich, N.B. Ho,
On tame, pet, miserable and strongly miserable impartial games,
RUTCOR Research Report 4-2015, Rutgers University.

\bibitem{JM80}
T.A. Jenkyns, J.P. Mayberry,
Int. J. of Game Theory 9 (1) (1980) 51--63,
The skeletion of an impartial game
and the Nim-Function of Moore's Nim$_k$.

\bibitem{Moo910}
E. H. Moore,
A generalization of the game called {\sc Nim},
Annals of Math., Second Series, 11:3 (1910) 93--94.

\bibitem{Spr35}
R. Sprague, \"Uber mathematische Kampfspiele,
Tohoku Math. J. 41 (1935-36) 438-444.

\bibitem{Spr37}
R. Sprague, \"Uber zwei abarten von nim,
Tohoku Math. J. 43 (1937) 351--354.

\end{thebibliography}

\end{document}